\documentclass[11pt,reqno]{amsart}
\usepackage[a4paper,textwidth=155mm,textheight=235mm,centering]{geometry}
\usepackage{lmodern}
\usepackage{amsmath,amssymb,amsthm}
\usepackage{microtype}
\usepackage[hidelinks]{hyperref}
\newtheorem{theorem}{Theorem}[section]
\newtheorem{lemma}[theorem]{Lemma}
\newtheorem{proposition}[theorem]{Proposition}

\newtheorem{definition}[theorem]{Definition}
\newtheorem{definitionn}[theorem]{Definitions}

\numberwithin{equation}{section}
\usepackage{xcolor}

\title[Bounded-orbit lattice representations]{Bounded-orbit lattice representations of finite groups}

\author[J.-L. Du]{Jia-Li Du}
\address{School of Mathematical Sciences, Nanjing Normal University, Nanjing 210023, P.R. China; Ministry of Education Key Laboratory of NSLSCS, Nanjing 210023, P.R. China}
\email{dujl@njnu.edu.cn}

\author[A. Lucchini]{Andrea Lucchini}
\address{Dipartimento di Matematica ``Tullio Levi-Civita'', University of Padova, Via Trieste 63, 35121 Padova, Italy}
\email{lucchini@math.unipd.it}

\author[J. Morris]{Joy Morris}
\address{Department of Mathematics and Computer Science, University of Lethbridge, 4401 University Drive, Lethbridge, AB T1K 3M4, Canada}
\email{joy.morris@uleth.ca}
\thanks{The first author was supported by the National Natural Science Foundation of China (12571370). The third author was supported by the Natural Science and Engineering Research Council of Canada (grant RGPIN-2024-04013).}

\author[P. Spiga]{Pablo Spiga}
\address{Dipartimento di Matematica e Applicazioni, University of Milano-Bicocca, Via Cozzi 55, 20125 Milano, Italy}
\email{pablo.spiga@unimib.it}

\subjclass[2020]{Primary 20B25; Secondary 06B05, 20D15, 05C25}
\keywords{Finite lattice, automorphism group, finite $2$-group, Frattini subgroup, quadratic form, Sidon subset}
\date{}

\begin{document}

\begin{abstract}
For a finite group $G$, let $\lambda(G)$ denote the minimum number of orbits on the elements of a finite lattice $L$ with $\operatorname{Aut}(L)\cong G$. Babai and Goodman conjectured that $\lambda(G)$ is bounded by an absolute constant. We prove that $\lambda(G)\leq 50$ for every finite group $G$, thereby confirming their conjecture.
Moreover, the lattice can be chosen to have a regular orbit. The main algebraic ingredient is a decomposition of a generating set of an arbitrary finite $2$-group into an elementary abelian part and two sets in which no quotient of distinct elements is an involution.

\end{abstract}

\maketitle

\section{Introduction}

The problem of realizing an abstract group as the full automorphism group of a combinatorial structure has a long history. Frucht~\cite{Frucht1938} proved that every finite group is the automorphism group of a finite graph. The corresponding existence theorem for finite lattices goes back to Birkhoff~\cite{Birkhoff}; see also Frucht~\cite{Frucht1950}. Once existence is established, one can ask how small or how symmetric a representing structure can be. The number of elements determines its size, whereas the number of automorphism orbits  provides an indication of its degree of symmetry. This is the point of view taken by Babai.

For graphs these two questions lead to rather different results. Babai~\cite{Babai1974} showed that, apart from the cyclic groups of orders $3$, $4$ and $5$, every finite group has a graph representation with at most two vertex-orbits. In contrast, Goodman~\cite{Goodman} proved that there is no absolute bound on the number of edge-orbits needed to represent all finite groups. Babai, Goodman and Lov\'asz~\cite{BGL} developed the framework for this edge-orbit problem, relating graph representations to subgroup partitions and distinguished elements of the group. The present paper builds on their work.

The corresponding questions for posets have been answered affirmatively in~\cite{MS}: for every finite group $G$, there exists a poset having automorphism group isomorphic to $G$ and with at most $4$ orbits.

In this paper we are interested in lattices. A \textit{\textbf{lattice}} is a partially ordered set in which every pair $x,y$ has a greatest lower bound $x\wedge y$ and a least upper bound $x\vee y$, called its \textit{\textbf{meet}} and \textit{\textbf{join}}, respectively. We say that a finite lattice $L$ \textit{\textbf{represents}} a finite group $G$ if $\operatorname{Aut}(L)\cong G$, where automorphisms preserve the order.  Write
\[
\lambda(G)=\min\bigl\{|L/\operatorname{Aut}(L)|:
 L\text{ is a finite lattice representing }G\bigr\}.
\]
Here $L/\operatorname{Aut}(L)$ denotes the set of orbits on the elements of $L$. An orbit is \textit{\textbf{regular}} if the stabilizer of each of its points is trivial.

Babai~\cite[Conjecture~5.29]{Babai1981}  originally conjectured that no absolute bound exists on the number of orbits that suffices to represent every finite group as the automorphism group of a finite lattice. In our notation,
this says that $\lambda(G)$ is unbounded as $G$ ranges over the finite
groups. Babai and Goodman~\cite[Corollary~4.12]{BG} later proved that every
finite group $G$ has a lattice representation with at most $c|G|$
elements, where $c$ is an absolute constant. In the same paper, they
proposed the opposite of Babai's original conjecture: there is an
absolute constant $C$ such that $\lambda(G)\leq C$ for every finite
group $G$~\cite[Conjecture~4.13]{BG}. This would strengthen their result
on the number of elements, since every orbit has at most $|G|$ elements,
and hence a lattice representation with at most $C$ orbits has at most
$C|G|$ elements.

The standard construction, introduced by Birkhoff~\cite{Birkhoff}, is quite elementary. Given a finite group $G$, it starts with a finite graph $\Gamma$ with $\mathrm{Aut}(\Gamma)\cong G$ and it transforms $\Gamma$ into a lattice $L$ by considering the vertex--edge incidence determined
 by $\Gamma$. It turns out that $\operatorname{Aut}(L)\cong\operatorname{Aut}(\Gamma)$ and
$|L/\operatorname{Aut}(L)|
 =2+|V(\Gamma)/\operatorname{Aut}(\Gamma)|
    +|E(\Gamma)/\operatorname{Aut}(\Gamma)|$;
see \cite{Frucht1950} and \cite[Corollary~4.12]{BG}. 
Thus boundedly many edge-orbits suffice for boundedly many lattice orbits. Goodman's theorem shows, however, that this construction cannot settle the lattice conjecture for all finite groups. 

The connection with subgroup partitions also motivated our recent work in~\cite{DMS}.  Our discussion there of the lattice conjecture emphasizes the need to combine subgroup information (like the main result in~\cite{DMS}) with further relational data. We pursue that approach here.
Our main result proves the bounded-orbit conjecture of Babai-Goodman.

\begin{theorem}\label{thm:main}
For every finite group $G$, there is a finite lattice $L$ such that
$\operatorname{Aut}(L)\cong G$ and 
$|L/\operatorname{Aut}(L)|\leq50$.
The lattice can be chosen to have a regular orbit. 
\end{theorem}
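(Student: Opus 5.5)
We split according to whether Birkhoff's construction already suffices. By the Babai--Goodman--Lov\'asz framework \cite{BGL}, a finite group $G$ has a graph representation with boundedly many vertex- and edge-orbits unless $G$ belongs to a restricted family. The obstruction found by Goodman comes from groups with very many generators of order $2$ modulo the Frattini subgroup, essentially $2$-groups of large rank and related elementary abelian $2$-sections. Outside that family we take a graph $\Gamma$ with $\operatorname{Aut}(\Gamma)\cong G$, a regular vertex-orbit and a bounded number of edge-orbits, and we pass to the vertex--edge incidence lattice. Then $|L/\operatorname{Aut}(L)|=2+|V/G|+|E/G|$, which is bounded, and $L$ inherits the regular orbit.

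For the remaining groups we build the lattice directly on a Cayley-type structure. Fix a generating set $S$ of $G$ and take $L$ to contain the bottom and top elements, a regular orbit of atoms indexed by $G$, and a bounded number of further layers. These layers are elements encoding the ``edges'' $\{g,gs\}$ and also cosets $gH$ of suitably chosen subgroups $H$, ordered by inclusion of the atom sets below them. This combines the subgroup partition data of \cite{DMS} with relational data. The point is that an edge orbit $\{g,gs\}$ with $s^2\neq 1$ is directed and recovers right multiplication by $s$. Moreover, for a set $T$ of generators in which no quotient $t^{-1}t'$ of distinct elements is an involution, one can join all the edges $\{g,gt\}$, $t\in T$, into a single lattice orbit. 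Their union is then still rigid, because two distinct $t,t'$ cannot be confused through an involution symmetry. The elementary abelian part is encoded instead by a single coset-type orbit: a subgroup $E\cong C_2^k$ gives blocks $gE$, and a Sidon-type or quadratic-form labelling pins down individual elements inside each block using a bounded number of additional orbits.

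The key algebraic step, which we expect to be the main obstacle, is the decomposition lemma announced in the abstract. It states that every generating set of a finite $2$-group can be modified into $A\cup T_1\cup T_2$, where $A$ generates an elementary abelian group and each $T_i$ has no involutory quotients of distinct elements. We would argue modulo the Frattini subgroup. On $V=G/\Phi(G)$ the squaring map induces a quadratic form into $\Phi(G)/\Phi(G)^2[\ldots]$, and we choose lifts using a quadratic form together with large Sidon subsets of $\mathbb{F}_2^n$. The aim is to arrange that the differences of chosen generators avoid the involution locus, splitting the generators into two Sidon-like classes plus a residue whose generated subgroup is elementary abelian. General groups reduce to this case through a Sylow $2$-subgroup, or the problematic $2$-part, with the odd part handled by the graph construction.

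Finally, we verify three things. First, $\operatorname{Aut}(L)\cong G$: every automorphism fixes the bottom and top elements and each layer, it preserves the orbit labels determined by rank and by covering counts, and it acts on the atoms by maps commuting with all right multiplications by generators, so it is a left translation. Second, we count orbits layer by layer to obtain at most $50$. Third, the atom layer is regular.
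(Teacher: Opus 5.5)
The central gap is rigidity: the mechanism you describe does not work. Merging all $t\in T$ into one orbit (one element above $g$ and all $gt$, $t\in T$) records only the set relation $f(gT)=f(g)T$. Admissibility does not prevent automorphisms of $G$ that permute $T$. For example, in $C_4\times C_4=\langle a\rangle\times\langle b\rangle$ the set $T=\{a,b\}$ is admissible, indeed Sidon. Yet the automorphism $\sigma$ exchanging $a$ and $b$ fixes $1$ and maps $\{g\}\cup gT$ to $\{\sigma(g)\}\cup\sigma(g)T$, and nothing in your sketch tells $a$ from $b$. Nor is one orbit of undirected edges $\{g,gs\}$ with $s^2\neq1$ ``directed'': for $G=\langle s\rangle$ of order at least $3$ it is a cycle, which has a reflection fixing $1$.

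In the paper, rigidity comes from the Frattini quotient. The Babai--Goodman--Lov\'asz encoding of $V=P/\Phi(P)$ by four subspace partitions (Lemma~\ref{lem:abelian-code}) is pulled back along $\pi\colon P\to V$ to subgroup partitions for $K_i=\pi^{-1}(J_i)$. Since $K_1\cap K_2=\Phi(P)$, every $f\in F_1$ induces a permutation of $V$ preserving those subspace partitions. That permutation is therefore the identity, so $f$ fixes each $\Phi(P)$-coset setwise. Then $f(T_i)=T_i$, together with the distinct images of the elements of $T_i$ in $V$, forces $f$ to fix $T_i$ pointwise. The single partition for $A$, together with $A\cap\Phi(P)=1$, fixes $A$ pointwise, and Lemma~\ref{lem:regular} gives $F=R(P)$.

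Admissibility has a different job. With independence modulo $\Phi(P)$ it makes each $T_i$ Sidon (Lemma~\ref{lem:sidon}), which is exactly what the lattice axiom needs: two star-coatoms share at most one atom. Your sketch never checks the lattice axioms for layers ``ordered by inclusion of the atom sets''. Two stars, a star and a coset, or two cosets of different subgroups may share several atoms, which then have no join. The paper avoids this in three ways: cosets become atoms $X_j(H_jg)$ joined to $B(g)$ by coatoms $E_j(g)$; distinct atom types $B$ and $A_i$ serve as centre and leaves of each star, which is also what orients it; and all types are recovered from chains of different lengths.

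The other load-bearing steps are also missing. For the decomposition theorem you only restate the goal. Sidon subsets of $\mathbb F_2^n$ cannot help, since $st^{-1}=ts^{-1}$ in an elementary abelian $2$-group: the condition $(st^{-1})^2\neq1$ is invisible in $V$ and must be detected through squares in $P$. The paper uses a single scalar form and induction on $d(P)$.
\begin{itemize}
\item Take $N\trianglelefteq P$ maximal among proper subgroups of $\Phi(P)$ normal in $P$. Then $Q(x\Phi(P))=x^2N$ is a nonzero quadratic form on $V$, and $Q(\bar s+\bar t)=1$ certifies $(st^{-1})^2\neq1$.
\item With $b$ the polar form and $K=\operatorname{rad}(b)\cap Q^{-1}(0)$, Lemma~\ref{lem:scalar} gives one of two outcomes: a basis of a complement of $K$ split into two sets of nonsingular vectors with pairwise polar product $1$, or a hyperbolic pair $u,v$ with $V=K\oplus\langle u,v\rangle$.
\item Since $Q(w+k)=1$ for these $w$ (respectively $w=u+v$) and all $k\in K$, their lifts can be merged with the sets obtained inductively for $H=\langle\text{lifts of a basis of }K\rangle$. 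Here $d(H)=\dim K<d(P)$ by Lemma~\ref{lem:independent-lifts}. In the hyperbolic case the merged sets are $\{x\}\cup T_1y$ and $\{y\}\cup T_2x$.
\end{itemize}

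The passage to arbitrary $G$ does not work as stated. A general $G$ has no canonical ``odd part''. A graph representing one subgroup and a lattice representing another do not assemble into one structure with automorphism group $G$. Adding one distinguished element per extra generator is unbounded; think of $C_3^k$. The paper keeps everything as relational data encoding $R(G)$, which combine by Lemma~\ref{lem:combine} once the pieces generate $G$. This requires the Breuer--Guralnick theorem $G=\langle P,H\rangle$, with $P$ a Sylow $2$-subgroup and $|H|$ odd. It also requires an encoding of $H$ by two Sidon sets: an irredundant generating set and its consecutive quotients $h_{i+1}h_i^{-1}$, which are Sidon because $H$ has no involutions.

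This makes your case split via a ``restricted family'' unnecessary. The bound $50$, which you only assert, is the value of $3+3r+2s+\binom{r+1}{2}+\binom{s+1}{2}$ at $r=4$ Sidon sets and $s=5$ partitions.
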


It is likely that this bound is not optimal. Even within our approach
to the construction, in many (perhaps all) situations some orbits
could be omitted while still obtaining a lattice representing the
same group. A different construction might yield a substantially
smaller universal bound. However, four orbits do not suffice in
general: the cyclic group of order three cannot be represented by
a lattice with at most four orbits, even without requiring a regular
orbit. We do not know the best possible universal bound, or whether
requiring a regular orbit changes that bound.

The main algebraic step in our proof is Theorem~\ref{thm:five}. For every finite $2$-group $P$, it gives a generating set
$X\mathbin{\dot\cup}T_1\mathbin{\dot\cup}T_2$
whose images form a basis of $P/\Phi(P)$, such that $\langle X\rangle$ is elementary abelian and
$(st^{-1})^2\neq1$ for every  $s,t\in T_i$ with $s\neq t$.

We conclude this introductory section by observing that in Theorem~\ref{thm:main} one cannot replace ``lattice'' by ``distributive lattice'', as proved by Babai in~\cite[Proposition~4.8]{Babai0}.

\section{Finite \texorpdfstring{$p$}{p}-groups and quadratic forms}\label{sec:p-prelim}

All groups, vector spaces and partially ordered sets considered below are finite. Throughout this section, $P$ denotes a finite $p$-group. We use multiplicative notation for groups, with identity $1$, and additive notation for vector spaces. 
Write $P'$ for the derived subgroup, $P^p=\langle x^p:x\in P\rangle$, and $d(P)$ for the least number of generators of $P$. The \textit{\textbf{Frattini subgroup}} $\Phi(P)$ is the intersection of all maximal proper subgroups, with $\Phi(1)=1$. A finite abelian group of exponent dividing $p$ is \textit{\textbf{elementary abelian}}; we regard it as a vector space over the finite field $\mathbb F_p$ of cardinality $p$.

We use the standard facts about finite $p$-groups collected in Robinson~\cite[Sections~5.2--5.3]{Rob}. In particular, every minimal nontrivial normal subgroup of a finite $p$-group is central of order $p$, and the Burnside basis theorem gives
\begin{equation}\label{eq:frattini}
\Phi(P)=P'P^p,
\qquad
 d(P)=\dim_{\mathbb F_p}P/\Phi(P).
\end{equation}
A subset of $P$ generates $P$ if and only if its images under the natural projection of $P$ onto $P/\Phi(P)$ span $P/\Phi(P)$. We also use the immediate consequences
\begin{equation}\label{eq:frattini-functorial}
\Phi(H)\leq\Phi(P)\quad(\hbox{for every } H\leq P),
\qquad
\Phi(P/N)=\Phi(P)N/N\quad(\hbox{for every } N\trianglelefteq P).
\end{equation}
The next two lemmas record the particular consequences needed in the proof of Theorem~\ref{thm:five}.

\begin{lemma}\label{lem:independent-lifts}
Suppose $x_1,\ldots,x_m\in P$ have independent images in $P/\Phi(P)$, and put $H=\langle x_1,\ldots,x_m\rangle$. Then $d(H)=m$.
\end{lemma}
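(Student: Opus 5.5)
We must show $d(H)=m$. Since $H$ is generated by the $m$ elements $x_1,\ldots,x_m$, the Burnside basis theorem \eqref{eq:frattini} applied to $H$ gives $d(H)\le m$ at once. So the whole content is the reverse inequality $d(H)\ge m$. For this we compare the Frattini quotient of $H$ with that of $P$, using the inclusion $\Phi(H)\le\Phi(P)$ from \eqref{eq:frattini-functorial}.

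The plan is as follows. The inclusion $H\to P$ composed with the projection $P\to P/\Phi(P)$ is a homomorphism whose kernel is $H\cap\Phi(P)$, and this kernel contains $\Phi(H)$. Hence it induces a surjective homomorphism
\[
\pi\colon H/\Phi(H)\longrightarrow H\Phi(P)/\Phi(P).
\]
Both groups are elementary abelian, so $\pi$ is $\mathbb F_p$-linear. The image $H\Phi(P)/\Phi(P)$ is spanned by the images of $x_1,\ldots,x_m$. By hypothesis these images are independent, so the image has dimension exactly $m$. Since $\pi$ is surjective,
\[
d(H)=\dim_{\mathbb F_p}H/\Phi(H)\ \ge\ \dim_{\mathbb F_p}H\Phi(P)/\Phi(P)=m .
\]
Combining this with $d(H)\le m$ gives equality. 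As a byproduct, $\pi$ is an isomorphism, so $\Phi(H)=H\cap\Phi(P)$, and the $x_i$ form a minimal generating set of $H$.

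There is no real obstacle here. The only point needing care is that $\pi$ is well defined, and this is exactly the statement $\Phi(H)\le\Phi(P)$, recorded in \eqref{eq:frattini-functorial}. That inclusion holds because $\Phi(H)=H'H^p\le P'P^p=\Phi(P)$ by \eqref{eq:frattini}.
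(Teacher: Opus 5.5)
Your proof is correct and is essentially the paper's argument: $\Phi(H)\leq\Phi(P)$ induces a map from $H/\Phi(H)$ onto an $m$-dimensional subspace of $P/\Phi(P)$, and together with $d(H)\leq m$ this forces $d(H)=m$. Your extra observation that this map is then an isomorphism, so $\Phi(H)=H\cap\Phi(P)$, is also correct, though the paper does not state it.
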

\begin{proof}
By \eqref{eq:frattini-functorial}, $\Phi(H)\leq\Phi(P)$, so the mapping $H/\Phi(H)\to P/\Phi(P)$ is defined. Its image has dimension $m$. Its domain has dimension $d(H)\leq m$, since the $x_i$ generate $H$. A surjection onto an $m$-dimensional image from a space of dimension at most $m$ is injective and has domain of dimension $m$. 
\end{proof}
We collect here some standard definitions from linear algebra.
\begin{definitionn}{\rm
A map $Q:V\to\mathbb F_2$, where $V$ is an $\mathbb F_2$-vector space, is a \textit{\textbf{quadratic form}} if $Q(0)=0$ and
$b(u,v)=Q(u+v)+Q(u)+Q(v)$
is bilinear. The form $b$ is called its \textit{\textbf{polar form}}. It is \textit{\textbf{alternating}}, that is, $b(v,v)=0$ for every $v$. Its \textit{\textbf{radical}} is
$\operatorname{rad}(b)=\{v\in V:b(v,w)=0\text{ for all }w\in V\}$.
We call $b$, and the quadratic space with polar form $b$, \textit{\textbf{nondegenerate}} when $\operatorname{rad}(b)=0$. Two subspaces $U,W$ are \textit{\textbf{orthogonal}} if $b(U,W)=0$. We write $V=U\perp W$ when $V=U\oplus W$ and $U,W$ are orthogonal.
The restriction of $Q$ to $\operatorname{rad}(b)$ is linear. Thus $
K=\{v\in\operatorname{rad}(b):Q(v)=0\}$
is a subspace.

A vector is \textit{\textbf{singular}} if its quadratic value is zero and \textit{\textbf{nonsingular}} if its quadratic value is one. A nondegenerate quadratic plane is \textit{\textbf{hyperbolic}} if it has a basis $e,f$ with $Q(e)=Q(f)=0$, $b(e,f)=1$, and is \textit{\textbf{anisotropic}} if all its nonzero vectors have quadratic value one.
}
\end{definitionn}

\begin{lemma}\label{lem:quadratic-section}
Let $P$ be a finite $2$-group with $\Phi(P)\neq1$. There is a subgroup $N\trianglelefteq P$, with $N<\Phi(P)$, such that $\Phi(P)/N$ is central of order two in $P/N$. Upon identifying it with the additive group of $\mathbb F_2$, the rule
\begin{equation}\label{eq:Q-section}
Q(x\Phi(P))=x^2N
\end{equation}
defines a nonzero quadratic form on $V=P/\Phi(P)$.
\end{lemma}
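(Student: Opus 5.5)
We construct $N$ so that $x^2\notin N$ for at least one $x\in P$, then check that $Q$ is well defined, quadratic and nonzero.

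\emph{Choice of $N$.} First we show that $P^2\not\le N$ for some normal $N$ of index two in $\Phi(P)$. By \eqref{eq:frattini}, $\Phi(P)=P'P^2$. We also know $P'\le P^2$ for $2$-groups, since $[x,y]=x^{-2}(xy^{-1})^2y^2$. Hence $\Phi(P)=P^2$, which is nontrivial by hypothesis. Now $\Phi(P)/[\Phi(P),P]\Phi(P)^2$ is a nontrivial elementary abelian group that is central in $P/[\Phi(P),P]\Phi(P)^2$; it is nontrivial because a nontrivial normal subgroup $M$ of a $p$-group satisfies $[M,P]M^p<M$. Pick a hyperplane $N$ of this space, lifted back to $P$, such that some square $x^2$ is not in $N$. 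Such $N$ exists because the squares generate $\Phi(P)$, so they are not all contained in every hyperplane. Then $N\trianglelefteq P$, $N<\Phi(P)$, and $\Phi(P)/N$ has order two and is central in $P/N$ because $[\Phi(P),P]\le N$.

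\emph{Well-definedness.} Suppose $x\Phi(P)=y\Phi(P)$, say $y=xf$ with $f\in\Phi(P)$. Working in $\bar P=P/N$, the element $\bar f$ is central of order dividing $2$. Hence $\bar y^2=\bar x^2\bar f^2=\bar x^2$. Moreover $x^2\in\Phi(P)$, so the value lies in $\Phi(P)/N\cong\mathbb F_2$. Clearly $Q(0)=1\cdot N=0$.

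\emph{Bilinearity of the polar form.} In $\bar P$ we have $\bar x^2\in Z(\bar P)$ and $[\bar x,\bar y]\in\bar\Phi$, which is central. So $\bar P$ has class at most $2$, and the commutator identity gives $(\bar x\bar y)^2=\bar x^2\bar y^2[\bar y,\bar x]$. Thus
\[
b(x\Phi(P),y\Phi(P))=Q(xy)+Q(x)+Q(y)=[y,x]N .
\]
In class two, the commutator map is bimultiplicative. Its values lie in $\bar\Phi$ and it depends only on cosets modulo $\Phi(P)$, because $\bar\Phi$ is central. Therefore $b$ is a well-defined bilinear form on $V$, and $Q$ is quadratic. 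Finally, $Q$ is nonzero by the choice of $N$ with $x^2\notin N$.

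The main point needing care is the choice of $N$: it must be normal with central quotient of order two while avoiding some square. The argument above via $\Phi(P)=P^2$ and the Frattini quotient of $\Phi(P)$ modulo $[\Phi(P),P]$ handles this.
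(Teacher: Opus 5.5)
Your proof is correct and is essentially the paper's: well-definedness comes from $\bar f$ being central of order two in $P/N$, and the polar form is the commutator pairing of the class-two quotient $P/N$, hence bilinear. The only difference is in how you choose $N$ and prove $Q\neq0$. The paper takes $N$ maximal among proper $P$-normal subgroups of $\Phi(P)$ and shows by contradiction that $Q\neq0$ for every such $N$ (otherwise $P/N$ has exponent two, so it is elementary abelian, contradicting $\Phi(P/N)=\Phi(P)/N\neq1$). Your lifted hyperplanes of $\Phi(P)/[\Phi(P),P]\Phi(P)^2$ are exactly the same subgroups, and your observation $\Phi(P)=P^2$ already shows that no proper subgroup of $\Phi(P)$ contains all squares, so every such hyperplane works and the extra condition you impose on $N$ holds automatically.
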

\begin{proof}
As $\Phi(P)\ne 1$, among the proper subgroups of $\Phi(P)$ that are normal in $P$,  we can choose one maximal by inclusion and call it $N$.  Hence $\Phi(P)/N$ is minimal nontrivial normal in $P/N$ and hence $\Phi(P)/N$ is central of order two.

Put $\overline P=P/N$ and $Z_0=\Phi(P)/N$. Equation~\eqref{eq:frattini-functorial} gives $\Phi(\overline P)=Z_0$.
All squares and all commutators of $\overline P$ lie in $Z_0$. In particular, $\overline P$ has class at most two and exponent at most four. If $z\in Z_0$, then $z$ is central and $z^2=1$, whence $(xz)^2=x^2$. This proves that \eqref{eq:Q-section} is independent of the representative modulo $\Phi(P)$.

In a group with central commutators one has
$(xy)^2=x^2y^2[y,x]$.
Indeed, the identity $yx=xy[y,x]$ permits the middle two factors in $xyxy$ to be interchanged, and the commutator is central. Also
$[xy,z]=[x,z][y,z]$ and $[x,yz]=[x,y][x,z]$,
when the commutators are central. In $\overline P$, the commutators have order at most two. Therefore the polar form of $Q$ is the map induced by commutation, and the two displayed commutator identities show that it is bilinear. It factors through $V$, because $Z_0$ is central.

Finally, if $Q=0$, every element of $\overline P$ has square one and hence it is elementary abelian. Thus \eqref{eq:frattini} gives $\Phi(\overline P)=1$, contrary to $|Z_0|=2$. Thus $Q\neq0$.
\end{proof}

\section{The scalar quadratic decomposition}\label{sec:quadratic}

We now prove the linear-algebra statements that we will use to decompose a generating set. The first result is simply an observation, and we include a proof for completeness.

\begin{lemma}\label{lem:alternating}
Let $b$ be an alternating bilinear form on a finite-dimensional space over $\mathbb F_2$, and let $R=\operatorname{rad}(b)$.
\begin{enumerate}
\item Every vector-space complement $U$ to $R$ is nondegenerate for $b$.
\item Every nondegenerate alternating space is an orthogonal direct sum of nondegenerate two-dimensional spaces. In particular, its dimension is even, and all nondegenerate alternating forms of the same dimension are equivalent.
\item In even nondegenerate dimension $r$, there is a basis $w_1,\ldots,w_r$ with $b(w_i,w_j)=1$ whenever $i\neq j$.
\end{enumerate}
\end{lemma}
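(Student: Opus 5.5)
I will prove the three parts in order; each is a short linear-algebra argument.

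For (1), let $V=R\oplus U$ and take $u\in U$ with $b(u,U)=0$. Since $b(u,R)=0$ by definition of $R$, bilinearity gives $b(u,V)=0$. Hence $u\in R\cap U=0$, so $U$ is nondegenerate.

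For (2), I will induct on $\dim V$ for a nondegenerate alternating space $V$.
\begin{enumerate}
\item If $V\neq0$, pick $e\neq0$. Nondegeneracy gives some $f$ with $b(e,f)=1$.
\item Put $H=\langle e,f\rangle$. Since $b(e,e)=b(f,f)=0$, the Gram matrix of $H$ is $\begin{pmatrix}0&1\\1&0\end{pmatrix}$, so $H$ is nondegenerate of dimension $2$.
\item Then $H\cap H^\perp=0$. Also $\dim H^\perp\ge\dim V-2$, because $H^\perp$ is the kernel of $v\mapsto(b(v,e),b(v,f))$. Hence $V=H\perp H^\perp$.
\item $H^\perp$ is nondegenerate: a vector of $H^\perp$ orthogonal to all of $H^\perp$ is also orthogonal to $H$, hence lies in $\operatorname{rad}(b)=0$.
\end{enumerate}
Induction then gives the orthogonal decomposition into planes, so $\dim V$ is even. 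Each plane has a hyperbolic basis as in step 2. Concatenating these bases gives a basis in which the Gram matrix is the standard block-diagonal matrix, so any two nondegenerate alternating forms of equal dimension are equivalent.

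For (3), by the equivalence in (2) it suffices to exhibit one nondegenerate alternating form in dimension $r$ with a basis whose Gram matrix is $J-I$. Here $J$ is the all-ones $r\times r$ matrix, so $J-I$ has $0$ on the diagonal and $1$ off it. I will define $b$ on $\mathbb F_2^r$ by the matrix $J-I$ with respect to the standard basis $w_1,\ldots,w_r$. This is symmetric with zero diagonal, hence alternating, and $b(w_i,w_j)=1$ for $i\neq j$ by construction.

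The only real step is showing $J-I$ is invertible over $\mathbb F_2$ when $r$ is even; this is the point where parity is used.
\begin{itemize}
\item $J^2=rJ=0$ over $\mathbb F_2$, since $r$ is even.
\item Hence $(J-I)^2=J^2-2J+I=I$ over $\mathbb F_2$.
\end{itemize}
So $J-I$ is invertible, and $b$ is nondegenerate. By (2) this form is equivalent to any given nondegenerate alternating form of dimension $r$. Transporting $w_1,\ldots,w_r$ through the equivalence gives the required basis.
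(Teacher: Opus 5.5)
Your proof is correct and follows the paper's argument essentially step for step. Part (1) uses the same radical argument, part (2) splits off a plane spanned by $e,f$ with $b(e,f)=1$ and inducts on its perpendicular complement, and part (3) realizes the zero-diagonal all-ones matrix and transports it via the equivalence in (2). The only cosmetic differences are that you verify invertibility of $J-I$ through $(J-I)^2=I$ (using $J^2=rJ=0$) where the paper checks directly that its kernel is trivial, and that you spell out the dimension count for $H^\perp$, which the paper leaves implicit.
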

\begin{proof}
If $u\in U$ is perpendicular to $U$, it is also perpendicular to $R$, and hence to $V=U\oplus R$. Thus $u\in U\cap R=0$, proving (1).

For (2), if the space is nonzero, choose $e\neq0$ and then $f$ with $b(e,f)=1$. Their span is nondegenerate. Every vector can be uniquely expressed as a vector in this plane plus a vector perpendicular to it, and the perpendicular complement is again nondegenerate. Induction splits the space into planes such that the bilinear form restricted to these planes has associated matrix $\left(\begin{smallmatrix}0&1\\1&0\end{smallmatrix}\right)$, proving even dimension and equivalence.

For (3), consider the $r\times r$ matrix with zero diagonal and all off-diagonal entries one. It is alternating. If it annihilates a column $x$, and $t$ is the sum of the coordinates of $x$, its $i$th equation is $t+x_i=0$. All $x_i$ therefore equal $t$, and $t=rt=0$ since $r$ is even. Thus the matrix is nondegenerate. Part (2) identifies it with the matrix of the given form in a suitable basis. The case $r=0$ is interpreted as an empty basis.
\end{proof}

\begin{lemma}\label{lem:planes}
Let $Q:V\to\mathbb{F}_2$ be a quadratic form over the field with 2 elements  having nondegenerate polar form. 
\begin{enumerate}
\item In dimension two, the space is either hyperbolic or anisotropic.
\item In every dimension at least four, the space contains both a hyperbolic plane and an anisotropic plane.
\end{enumerate}
\end{lemma}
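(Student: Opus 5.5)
For part (1), I would work directly in a two-dimensional space $V$ with nondegenerate polar form $b$. Nondegeneracy means there are $e,f$ with $b(e,f)=1$, and these form a basis, since $b(v,v)=0$ for all $v$. The three nonzero vectors are $e$, $f$ and $e+f$, and
\[
Q(e+f)=Q(e)+Q(f)+1 .
\]
I would then split into cases.
\begin{itemize}
\item If all three values are $1$, the plane is anisotropic by definition.
\item Otherwise some nonzero vector $u$ is singular. Pick any other nonzero $v$; then $b(u,v)=1$, because $\{u,v\}$ is a basis and $b$ is nondegenerate.
\item If $Q(v)=0$, then $u,v$ is already a hyperbolic basis.
\item If $Q(v)=1$, replace $v$ by $v'=u+v$. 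Then $Q(v')=0+1+1=0$ and $b(u,v')=b(u,v)=1$, so $u,v'$ is a hyperbolic basis.
\end{itemize}
Both alternatives cannot hold at once, since a hyperbolic plane contains singular nonzero vectors. This gives the dichotomy.

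For part (2), let $\dim V\ge 4$. By Lemma~\ref{lem:alternating}(2), $b$ splits as an orthogonal sum of nondegenerate planes, and by part (1) each plane is hyperbolic or anisotropic. Take two orthogonal planes $A\perp B$ from this decomposition. Any plane spanned by vectors $x,y$ with $b(x,y)=1$ is nondegenerate, so it suffices to produce a hyperbolic plane and an anisotropic plane inside $A\perp B$.

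If $A$ and $B$ are of different types, we are done immediately. If both are of the same type, I would mix them as follows.

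\emph{Both hyperbolic}, with bases $e_1,f_1$ and $e_2,f_2$. Set $x=e_1+e_2+f_2$ and $y=e_1+f_1+e_2$. Then
\[
Q(x)=Q(e_1)+Q(e_2+f_2)=0+1=1,
\]
and similarly $Q(y)=Q(e_1+f_1)+Q(e_2)=1+0=1$. For the cross term, $b(x,y)=b(e_1,f_1)+b(e_2+f_2,e_2)=1+1=0$, which is no good. I would therefore adjust to $x=e_1+e_2+f_2$ and $y=f_1+e_2$. Then $Q(y)=0$, which means I am searching for a plane whose three nonzero vectors all have $Q=1$.

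A cleaner approach is to take $x=e_1+f_1$, which has $Q(x)=1$, and $y=e_1+e_2+f_2$, which has $Q(y)=0+1=1$ and $b(x,y)=b(f_1,e_1)=1$. Then
\[
Q(x+y)=Q(x)+Q(y)+b(x,y)=1+1+1=1 .
\]
So $\langle x,y\rangle$ is anisotropic.

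\emph{Both anisotropic}, with bases $e_i,f_i$ where every nonzero vector has value $1$. Set $x=e_1+e_2$ and $y=f_1+f_2$. Then $Q(x)=1+1+0=0$, $Q(y)=0$, and $b(x,y)=b(e_1,f_1)+b(e_2,f_2)=1+1=0$, which fails. Instead take $x=e_1+e_2$ and $y=f_1$. Then $Q(x)=0$ and $b(x,y)=1$, so $\langle x,y\rangle$ is nondegenerate and contains a singular nonzero vector. By part (1) it is hyperbolic.

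The only real obstacle is this bookkeeping: finding explicit vectors with the right values and with $b=1$. The guiding principle is simple. A nondegenerate plane is hyperbolic exactly when it contains a nonzero singular vector, and anisotropic exactly when it has three nonsingular nonzero vectors; the latter amounts to $Q(x)=Q(y)=1$ with $b(x,y)=1$.
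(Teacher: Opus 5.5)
Your proof is correct and essentially follows the paper's route. Part (1) is the same adjustment $f=w+Q(w)e$. In part (2), your two constructions are the paper's: $e_1+e_2$ is its singular vector $e$, obtained as a sum of nonsingular vectors from orthogonal planes, and $\langle e_1+f_1,\,e_1+e_2+f_2\rangle$ is its plane $\langle e+f,e+z\rangle$ applied to the hyperbolic pair $e_1,f_1$ with $z=e_2+f_2$. The only difference is that you case-split on the types of $A$ and $B$, whereas the paper runs one uniform construction. In a final write-up, delete the abandoned attempts with $b(x,y)=0$.
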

\begin{proof}
If a nondegenerate plane has a nonzero singular vector $e$, choose $w$ with $b(e,w)=1$ and set $f=w+Q(w)e$. Then
$Q(f)=Q(w)+Q(w)b(w,e)=0$ and $b(e,f)=1$.
The vectors $e,f$ give a hyperbolic basis. If no nonzero vector is singular, the plane is anisotropic. This proves (1).

For (2), split off two orthogonal nondegenerate planes using Lemma~\ref{lem:alternating}. Each contains a nonsingular vector: for a pair $(a,b)$ with polar product one, the three values $Q(a),Q(b),Q(a+b)$ cannot all vanish. Choose one nonsingular vector in each plane. Their sum $e$ is nonzero and singular, since the planes are orthogonal. Choose $w$ with $b(e,w)=1$ and form $f=w+Q(w)e$ as above. This gives a hyperbolic plane $\langle e,f\rangle$.

Its perpendicular complement is nondegenerate and has dimension at least two, so it contains a vector $z$ with $Q(z)=1$. The vectors $e+f$ and $e+z$ satisfy
$Q(e+f)=Q(e+z)=1$ and $b(e+f,e+z)=1$.
Their sum also has quadratic value one. They therefore span an anisotropic plane.

\end{proof}

\begin{lemma}\label{lem:scalar}
Let $Q:V\to\mathbb F_2$ be a nonzero quadratic form, let $b$ be its polar form, and put $K=\operatorname{rad}(b)\cap Q^{-1}(0)$.
Then one of the following holds:
\begin{enumerate}
\item there are two (possibly empty)  disjoint sets $D_1,D_2$ whose union is a basis of a complement to $K$, such that
\begin{equation}\label{eq:local-five}
Q(v)=1\quad \forall v\in D_i,
\qquad b(v,w)=1\quad \forall v,w\in D_i,\ v\neq w,
\end{equation}
\item there are $u,v\in V$ such that
\begin{equation}\label{eq:hyperbolic}
V=K\oplus\langle u,v\rangle,
\qquad Q(u)=Q(v)=0,
\qquad b(u,v)=1.
\end{equation}
\end{enumerate}
Furthermore, in conclusion (1), $Q(x+k)=1$ for every $x\in D_1\cup D_2$ and every $k\in K$. In conclusion (2), $Q(u+v+k)=1$ for every $k\in K$.
\end{lemma}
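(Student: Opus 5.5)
We first reduce to the nondegenerate part. Put $R=\operatorname{rad}(b)$. The restriction $Q|_R$ is linear, so $K$ is its kernel, and $\dim R/K\le 1$. By Lemma~\ref{lem:alternating}(1), any complement $U$ to $R$ is nondegenerate, so $\dim U=r$ is even and $V=R\oplus U=R\perp U$.

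\emph{Case $R\neq K$.} Pick $z\in R$ with $Q(z)=1$. Then $R=K\oplus\langle z\rangle$. We may adjust $U$ so that $Q$ equals $1$ on a chosen basis. By Lemma~\ref{lem:alternating}(3), $U$ has a basis $w_1,\dots,w_r$ with $b(w_i,w_j)=1$ for $i\ne j$. Replace each $w_i$ by $w_i'=w_i+(1+Q(w_i))z$. Since $z\in R$, pairings are unchanged, and $Q(w_i')=Q(w_i)+(1+Q(w_i))Q(z)=1$. Take $D_1=\{z\}$ and $D_2=\{w_1',\dots,w_r'\}$. The elements of $D_1\cup D_2$ are independent, and together with $K$ they span $V$, so conclusion (1) holds; $D_1$ is a singleton, so the pairing condition there is vacuous.

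\emph{Case $R=K$.} Now $V=K\perp U$, and $Q\ne0$ forces $r\ge2$. We must split a basis of $U$ into two cliques $D_1,D_2$ (all pairings $1$, all values $1$) or else land in (2). If $r=2$, Lemma~\ref{lem:planes}(1) applies:
\begin{itemize}
\item if $U$ is anisotropic, take $D_1=\{a,c\}$ with $b(a,c)=1$, giving (1);
\item if $U$ is hyperbolic, we get (2).
\end{itemize}
For $r\ge4$, start from the basis of Lemma~\ref{lem:alternating}(3) and fix the $Q$-values. If the number of $w_i$ with $Q(w_i)=0$ is even, pair them up and apply a suitable change. A cleaner route is to note that a clique basis $w_1,\dots,w_r$ with all $Q(w_i)=1$ determines $Q$ up to isometry, and its type (Arf invariant) is computable. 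We will realize both isometry types via two cliques.

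Take $D_1$ a clique of size $r-2$ (or $r$) inside one orthogonal summand and $D_2$ of size $2$ in another. Sums across $D_1$ and $D_2$ need no pairing condition, so we may split $U=U_1\perp U_2$ with $\dim U_2=2$ anisotropic (available by Lemma~\ref{lem:planes}(2)). On $U_2$, the anisotropic plane gives $D_2$. On $U_1$, we need an all-ones clique basis, using induction or choosing whichever type works.

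The \textbf{main obstacle} is this step: showing that every nondegenerate $Q$ of dimension $\ge4$ admits such a two-clique basis. I would verify it by computing the Arf invariant of the all-ones clique form in dimension $m$, which depends on $m\bmod 8$, and checking that dimensions $m_1+m_2=r$ with $m_1,m_2$ even can realize either Arf value.

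For the final claims, take $k\in K$. Then $Q(x+k)=Q(x)+Q(k)+b(x,k)=1+0+0=1$ for $x\in D_1\cup D_2$. Likewise $Q(u+v+k)=Q(u)+Q(v)+b(u,v)+0=1$.
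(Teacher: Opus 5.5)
You handle the case $Q|_R\neq0$, the case $R=K$ with $r=2$, and the final assertions correctly, in the same way as the paper. The gap is the case $R=K$, $r\geq4$. You identify it as the main obstacle but leave it open, and the route you sketch for it cannot work. For an all-ones clique $w_1,\dots,w_m$ one has $Q(\sum_{i\in S}w_i)=|S|+\binom{|S|}{2}$ in $\mathbb F_2$, which vanishes exactly when $|S|\equiv0,3\pmod 4$. Counting singular vectors then shows that the clique form of dimension $m$ has Arf invariant $1$ precisely when $m\equiv2,4\pmod 8$. For example, $m=2,4,6$ give $1,6,36$ singular vectors, i.e.\ minus, minus and plus type.

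Hence for $r=6$ every orthogonal split $6=0+6=2+4$ yields a form of plus type. The minus-type form (an anisotropic plane orthogonal to two hyperbolic planes) therefore has no basis $D_1\cup D_2$ with $U=\langle D_1\rangle\perp\langle D_2\rangle$ and each $D_i$ an all-ones clique. Likewise, for $r=10$ every split has Arf invariant $1$, so the plus-type form is missed. In general, orthogonal splittings realize only one type when $r\equiv2\pmod4$. For $r=2$ this is exactly why alternative (2) exists, but for $r\geq6$ that alternative is unavailable. So the Arf computation you propose would refute your plan rather than confirm it. Separately, a given $U_1$ of dimension $r-2$ has an all-ones clique basis only if its type is the one forced by $r-2$, and you have not arranged that.

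The missing idea is that the lemma imposes no condition between $D_1$ and $D_2$. You note this yourself, but you then restrict to $U=U_1\perp U_2$ anyway. The paper first builds an all-ones clique $v_1,\dots,v_{r-2}$ in $U$, two vectors at a time. Let $a$ be the sum of the $m$ vectors already chosen ($m$ even) and $W_m$ their span. Lemma~\ref{lem:planes}(2), applied to $W_m^\perp$ of dimension $r-m\geq4$, gives $u,v$ with $Q(u)=Q(v)=1+Q(a)$ and $b(u,v)=1$. One then appends $a+u$ and $a+v$: both have $Q$-value $1$, and each pairs to $m-1=1$ with every earlier vector.

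Put $W=\langle v_1,\dots,v_{r-2}\rangle$. Then $W^\perp$ is a nondegenerate plane whose type you do not control. If it is anisotropic, any basis serves as $D_2$. If it is hyperbolic with basis $e,f$, take $D_2=\{e+v_1,f+v_1\}$. This set has $Q$-values $1$ and mutual pairing $1$, and it still reduces to a basis modulo $W$. This non-orthogonal translation by $v_1$ is exactly what overcomes the Arf-invariant obstruction above.
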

\begin{proof}
The last assertions follow from $Q(k)=0$ and $b(k,V)=0$. Now, we construct the complementary vectors.

We first deal with the case $Q$ is nonzero on $R=\operatorname{rad}(b)$.
Choose $z\in R$ with $Q(z)=1$. Since $Q|_R$ is linear, $R=K\oplus\langle z\rangle$. Choose a complement $U_0$ to $R$. By Lemma~\ref{lem:alternating}, it is nondegenerate, has even dimension $r$, and has a basis $w_1,\ldots,w_r$ with pairwise polar products one. Put
\[
v_i=w_i+(1+Q(w_i))z.
\]
The vector $z$ is in the polar radical, so $Q(v_i)=1$, and $b(v_i,v_j)=b(w_i,w_j)=1$ for $i\neq j$. The vectors $v_1,\ldots,v_r,z$ form a complementary basis to $K$: projecting onto $U_0$ shows independence of the $v_i$, and the remaining radical direction is $z$. Put the $v_i$ in one set and $z$ in a second. If $r=0$, use only $\{z\}$. Padding with empty sets gives (1).

Assume now that $Q$ vanishes on $R$.
Here $K=R$. Choose a nondegenerate complement $U$ of even dimension $r$. Since $Q\neq0$, we have $r\geq2$. For $r=2$, Lemma~\ref{lem:planes}(1) gives either a hyperbolic plane and hence conclusion (2) holds, or an anisotropic plane. In the anisotropic case any basis consists of two vectors of quadratic value one and polar product one; putting them in one set gives (1).

Suppose now that $r\geq4$. We first construct $r-2$ independent vectors $v_1,\ldots,v_{r-2}\in U$ such that
\[
Q(v_i)=1 \hbox{ for every }i,
\qquad b(v_i,v_j)=1\quad \hbox{ for every }i\neq j.
\]
The construction proceeds in pairs. Assume an even number $m$ have been constructed, with $m<r-2$, and put $a=v_1+\cdots+v_m$; for $m=0$, put $a=0$. Their span $W_m$ is nondegenerate, because its polar matrix is the even-sized matrix in Lemma~\ref{lem:alternating}(3). Hence $U=W_m\perp W_m^\perp$, and
\[
\dim W_m^\perp=r-m\geq4.
\]
By Lemma~\ref{lem:planes}(2), this complement contains a pair $u,v$ with
\[
Q(u)=Q(v)=1+Q(a),
\qquad b(u,v)=1:
\]
use an anisotropic basis if $Q(a)=0$, and a hyperbolic basis if $Q(a)=1$. Append $a+u$ and $a+v$.
Their quadratic values are $Q(a)+Q(u)=1$ and $Q(a)+Q(v)=1$, respectively, since $u,v\perp W_m$. Their mutual polar product is one. For any earlier $v_i$,
\[
b(a+u,v_i)=b(a+v,v_i)=\sum_{j=1}^{m}b(v_j,v_i)=m-1=1
\]
in $\mathbb F_2$; there are no such conditions when $m=0$. The new vectors are independent of the preceding ones because their projections onto $W_m^\perp$ are the independent vectors $u,v$. Thus the induction continues.

After $r-2$ vectors have been selected, put $D_1=\{v_1,\ldots,v_{r-2}\}$ and $W=\langle D_1\rangle$.
The space $W$ is nondegenerate, and $W^\perp$ is a nondegenerate plane. If this plane is anisotropic,
use any basis of it as $D_2$. If it is hyperbolic, choose a basis $e,f$ with $Q(e) = Q(f) = 0$ and
$b(e, f) = 1$, put $w = v_1$, and use 
$D_2 = \{e+w,f+w\}$. Since $w\perp e,f$
and $Q(w)=1$, we have 
\[
Q(e+w)=Q(f+w)=1,
\qquad b(e+w,f+w)=1.
\]
The images of these two vectors modulo $W$ are the basis $e,f$ of $W^\perp$. Thus in either case
$D_1\cup D_2$ is a basis of $U$, and all the conditions in (1) hold.
\end{proof}

\section{Two sets for an arbitrary finite \texorpdfstring{$2$}{2}-group}\label{sec:five}

Let $P$ be a group. An \textit{\textbf{involution}} is an element of order two. Call a set $T\subseteq P$ \textit{\textbf{admissible}} if
\begin{equation}\label{eq:admissible}
(st^{-1})^2\neq1\qquad(\forall s,t\in T,\ s\neq t).
\end{equation}
The empty set and singletons are admissible. The condition is symmetric in $s,t$, because reversing them inverts the quotient. The following result is the key ingredient for solving Babai and Goodman's conjecture on lattices.

\begin{theorem}\label{thm:five}
Let $P$ be a finite $2$-group. There exist a set $X\subseteq P$ and two possibly empty sets $T_1,T_2\subseteq P$ such that:
\begin{enumerate}
\item $A=\langle X\rangle$ is elementary abelian;
\item the images of the disjoint union $X\mathbin{\dot\cup}T_1\mathbin{\dot\cup}T_2$ 
form a basis of $P/\Phi(P)$;
\item every $T_i$ is admissible.
\end{enumerate}
In particular, the images of $X$ form a basis of $A\Phi(P)/\Phi(P)$, and $P=\langle A,T_1,T_2\rangle$.
\end{theorem}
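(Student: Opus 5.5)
We argue by induction on $|P|$, using Lemma~\ref{lem:quadratic-section} to obtain a quadratic form on $V=P/\Phi(P)$, Lemma~\ref{lem:scalar} to split $V$ into the subspace $K$ and a complement, and the inductive hypothesis on a proper subgroup lifting $K$. If $\Phi(P)=1$, then $P$ is elementary abelian, and we take $X$ to be a basis and $T_1=T_2=\emptyset$. Otherwise fix $N$ and $Q$ as in Lemma~\ref{lem:quadratic-section}. The basic translation is the following. For $x\in P$, if $Q(x\Phi(P))=1$ then $x^2\notin N$, so $x^2\neq1$. Hence a set $T\subseteq P$ is admissible as soon as $Q(s t^{-1}\Phi(P))=1$ for all distinct $s,t\in T$. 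Since $Q(a+b)=Q(a)+Q(b)+b(a,b)$ and $Q(-a)=Q(a)$, any lift of a set $D_i$ satisfying \eqref{eq:local-five} is admissible, because $Q(d-d')=1+1+1=1$.

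Choose elements $k_1,\dots,k_m\in P$ whose images form a basis of $K$, and put $H=\langle k_1,\dots,k_m\rangle$. Since $Q\neq0$ we have $K\neq V$, so $H<P$. By Lemma~\ref{lem:independent-lifts}, $d(H)=m$, and the natural map $H/\Phi(H)\to P/\Phi(P)$ is injective with image $K$; this uses \eqref{eq:frattini-functorial}. Applying induction to $H$ gives $X',T_1',T_2'\subseteq H$ satisfying the theorem for $H$. Their images in $P/\Phi(P)$ then form a basis of $K$, and every element of $H$ maps into $K$.

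In conclusion~(1) of Lemma~\ref{lem:scalar}, take lifts $\widetilde D_i$ of $D_i$ and set $X=X'$ and $T_i=T_i'\cup\widetilde D_i$. Condition~(2) of the theorem holds because $D_1\cup D_2$ is a basis of a complement to $K$. Pairs inside $T_i'$ are handled by induction, and pairs inside $\widetilde D_i$ by the computation above. For a mixed pair $s\in\widetilde D_i$ and $t\in T_i'$, the image of $st^{-1}$ is $d+k$ with $d\in D_i$ and $k\in K$. By the final assertion of Lemma~\ref{lem:scalar}, $Q(d+k)=1$, so $(st^{-1})^2\neq1$. The last sentence of the theorem then follows formally from~(2) and the Burnside basis theorem.

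The main obstacle is conclusion~(2), where $V=K\oplus\langle u,v\rangle$ is a hyperbolic plane over $K$. Only $u+v$ (plus $K$) is guaranteed nonsingular, while $Q(u+k)=Q(v+k)=0$. So neither $u$ nor $v$ can simply be added to a $T_i$ by the $Q$-test, and placing $u+v$ in $T_1$ leaves one further direction to accommodate. My first attempt will be to avoid this case by choosing $N$ more carefully: among normal subgroups $N<\Phi(P)$ with $\Phi(P)/N$ of order two (not necessarily maximal), I will look for one whose form is not of hyperbolic-plane type over $K$. Such a choice changes $Q$ through the choice of the central quotient. If that fails, I will strengthen the inductive hypothesis. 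One option is to apply induction to the larger proper subgroup generated by lifts of $K\oplus\langle u\rangle$, which is totally singular for $Q$, and then add a lift of $u+v$ (or of $v$) to $T_1$. Admissibility against the elements of that subgroup would then be checked directly, using $b(v,u)=1$ together with the fact that $(st^{-1})^2\neq1$ only needs to hold in $P$ itself, not modulo $N$. Verifying this mixed admissibility is the step I expect to require the real work.
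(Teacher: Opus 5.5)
Your base case and your handling of conclusion~(1) of Lemma~\ref{lem:scalar} are correct and match the paper's argument. (The paper inducts on $d(P)$ rather than $|P|$, which makes no difference.) The gap is conclusion~(2), the hyperbolic case. There you only list strategies, and neither completes the proof.

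Option (a) is an unproved hope. You give no argument that some admissible $N$ always avoids conclusion~(2), and the hyperbolic case does occur: for $D_8$, $N=1$ is forced and the form is hyperbolic. Option (b) hits exactly the obstruction you noticed. The inductive sets $T_i'$ for the subgroup lifting $K\oplus\langle u\rangle$ are built from a different form, so their images may lie in $K$ or in $u+K$ in an uncontrolled way. A lift of $v$ fails the $Q$-test against elements with image $k\in K$, since $Q(v+k)=0$. A lift of $u+v$ fails it against elements with image $u+k$, since $Q(v+k)=0$ again. Beyond this test you have no mechanism for certifying $(st^{-1})^2\neq1$ in $P$.

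The missing idea is that you need not keep the inductive sets fixed; you may right-translate them. Right multiplication preserves every internal quotient, $(sg)(tg)^{-1}=st^{-1}$, so admissibility is untouched. It also shifts all images in $V$ by one fixed vector, so the basis property survives because the change of basis is unitriangular. With lifts $x,y$ of $u,v$, the paper keeps $X$ and sets
\[
T_1^*=\{x\}\cup\{ty:t\in T_1\},\qquad T_2^*=\{y\}\cup\{tx:t\in T_2\}.
\]
The only new quotients are $x(ty)^{-1}=xy^{-1}t^{-1}$ and $y(tx)^{-1}=yx^{-1}t^{-1}$. Both have image $u+v+\bar t$ with $\bar t\in K$, and $Q(u+v+\bar t)=1$ by the final assertion of Lemma~\ref{lem:scalar}, so their squares lie outside $N$. (If one inductive set, say $T_2$, is empty, even $T_2=\{x,y\}$ works since $Q(u+v)=1$. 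The translation is needed precisely when both are nonempty.)
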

\begin{proof}
Use induction on $d(P)$. For $P=1$, all the sets are empty. If $\Phi(P)=1$, then~\eqref{eq:frattini} makes $P$ elementary abelian. Take $X$ to be a basis, and take all $T_i$ empty. 

Suppose $\Phi(P)\neq1$. Choose $N$ and the nonzero quadratic form $Q$ from Lemma~\ref{lem:quadratic-section}, on $V=P/\Phi(P)$. Let $b$ be its polar form and put $K=\operatorname{rad}(b)\cap Q^{-1}(0)$. Because $Q\neq0$, the subspace $K$ is proper in $V$.

Choose lifts of a basis of $K$, and let $H$ be the subgroup they generate. Lemma~\ref{lem:independent-lifts} gives
$d(H)=\dim K<\dim V=d(P)$. Apply induction to $H$, obtaining $A=\langle X\rangle$ elementary abelian and two sets $T_1,T_2$. Their images, together with those of $X$, are a basis of $K$ when viewed inside $V$. Throughout the following argument, a bar denotes image in $V$.

Assume first that conclusion (1) of Lemma~\ref{lem:scalar} holds.
Choose lifts $L_i\subseteq P$ of the vectors in $D_i$, and replace each $T_i$ by $T_i\cup L_i$. If $x,y\in L_i$ are distinct, then
\[
Q(\bar x+\bar y)=Q(\bar x)+Q(\bar y)+b(\bar x,\bar y)=1.
\]
Since $\overline{xy^{-1}}=\bar x+\bar y$ in the elementary abelian quotient $V$, equation~\eqref{eq:Q-section} gives $(xy^{-1})^2N \neq N$. Thus $(xy^{-1})^2\neq1$ in $P$.

If $x\in L_i$ and $t\in T_i$, then $\bar t\in K$, so Lemma~\ref{lem:scalar} gives $Q(\bar x+\bar t)=1$. The same argument shows $(xt^{-1})^2\neq1$; the reversed quotient satisfies the same condition. Pairs already in $T_i$ remain admissible by induction. Thus every enlarged set is admissible. The union of the $D_i$ is a complementary basis to $K$, so the union $X\cup T_1\cup T_2$ remains disjoint and has basis images in $V$. The set $X$ and subgroup $A$ have not changed. 

Assume next that conclusion (2) of Lemma~\ref{lem:scalar} holds.
Choose lifts $x,y\in P$ of the vectors $u,v$ in \eqref{eq:hyperbolic}. Keep $X$ and $A$ unchanged, and define
\begin{align*}
T_1^*&=\{x\}\cup\{ty:t\in T_1\},\notag\\
T_2^*&=\{y\}\cup\{tx:t\in T_2\}.
\end{align*}
Here the old sets may be empty, in which case the corresponding new sets are just the indicated singletons.

Right multiplication leaves internal quotients unchanged,
consequently distinct translated members of the same set satisfy \eqref{eq:admissible}. For the new pairs involving $x$ or $y$, use $\bar t\in K$ to obtain
\[
Q\bigl(\overline{x(ty)^{-1}}\bigr)
 =Q(u+v+\bar t)=1
\qquad (\forall t\in T_1).
\]
Thus $\bigl(x(ty)^{-1}\bigr)^2\notin N$, and in particular it is not $1$. Similarly,
\[
Q\bigl(\overline{y(tx)^{-1}}\bigr)=Q(v+u+\bar t)=1
\qquad(\forall t\in T_2).
\]

As above, it is clear that $X\cup T_1^*\cup T_2^*$ is a disjoint union and that their images in $V$ form a basis.

Both alternatives therefore preserve the induction statement. The resulting basis images generate $P$ by the Burnside basis theorem, which also proves the last assertion of the theorem.
\end{proof}

\section{Relational data with a prescribed permutation group}\label{sec:codes}

For a finite group $G$ and $g\in G$, let $\rho_g$ denote the permutation $x\mapsto xg$. Write
\[
R(G)=\{\rho_g:g\in G\}.
\]
This is the \textit{\textbf{right regular group}} of $G$. With the usual composition of functions, the map $g\mapsto\rho_{g^{-1}}$ is an isomorphism $G\to R(G)$.

\begin{definitionn}\label{def:data}{\rm
For subsets $S_1,\ldots,S_r\subseteq G$ and subgroups $H_1,\ldots,H_s\leq G$, let $F$ be the set of all permutations $f$ of $G$ satisfying
\begin{align}\label{eq:data}
f(S_i g)&=S_i f(g) &&(\forall 1\leq i\leq r,\ \forall g\in G),\notag\\
f(H_j g)&=H_j f(g) &&(\forall 1\leq j\leq s,\ \forall g\in G).
\end{align}
These will be called the \textit{\textbf{subset relations}} and \textit{\textbf{subgroup partitions}}, respectively. They \textit{\textbf{encode}} $R(G)$ if $F=R(G)$.}
\end{definitionn}

The set $F$ is a group: the conditions are preserved by composition, and replacing $g$ by $f^{-1}(g)$ shows they are preserved by inversion. Every right translation belongs to $F$. For a subgroup $H$, the second condition means precisely that $f$ preserves the partition into right cosets of $H$: the image of the coset containing $g$ must be the coset containing $f(g)$. For a singleton $S=\{a\}$, the first condition is the identity $f(ag)=af(g)$ for every $g$.

\begin{lemma}[{\cite[Proposition~3.6]{BGL}}]\label{lem:regular}
Let $F\leq\operatorname{Sym}(G)$ contain $R(G)$. If every element of the stabilizer $F_1$ fixes every member of a generating set of $G$, then $F=R(G)$.
\end{lemma}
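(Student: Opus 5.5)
The plan is to show that a permutation $f\in F_1$ is trivial by propagating fixed points from the identity. Let $f\in F$ with $f(1)=1$, and let $Y\subseteq G$ be a generating set with $f(y)=y$ for every $y\in Y$. Put
\[
C=\{g\in G: f(g)=g\}.
\]
We will show that $C$ is closed under right multiplication by elements of $Y$, and then argue that this forces $C=G$.

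The key step is to commute $f$ past right translations. Fix $y\in Y$ and consider $h=\rho_y^{-1} f\rho_y$. Since $R(G)\leq F$ and $F$ is a group, we have $h\in F$. However, this conjugate fixes $1$ only when $f(y)=y$, which is precisely our hypothesis. So $h$ lies in $F_1$, but a priori $h$ might fix a different set of points than $f$ does.

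This suggests working instead with the whole stabilizer. Let $C_0=\{g: \phi(g)=g \text{ for all } \phi\in F_1\}$, the set of points fixed by every element of $F_1$. By hypothesis $Y\subseteq C_0$, and clearly $1\in C_0$. Now take $g\in C_0$ and $y\in Y$, and pick any $\phi\in F_1$. Since $\phi(g)=g$, the conjugate
\[
\psi=\rho_g\,\phi\,\rho_g^{-1}
\]
(composition as functions) fixes $1$. Indeed, $\rho_g^{-1}(1)=g^{-1}$, and we need $\phi(g^{-1}\cdot g)$ to be handled correctly, so the translation direction must be chosen carefully. Taking $\psi=\rho_{g}^{-1}\phi\rho_{g}$ gives $\psi(x)=\phi(xg)g^{-1}$, hence $\psi(1)=\phi(g)g^{-1}=1$. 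Thus $\psi\in F_1$, and therefore $\psi(y)=y$. Unwinding, $\phi(yg)g^{-1}=y$, so $\phi(yg)=yg$. This shows $yg\in C_0$.

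Consequently $C_0$ is closed under left multiplication by $Y$. Since $G$ is finite, every element is a product of elements of $Y$ (inverses are positive powers), so $C_0=G$. Hence $F_1=1$, and $F=R(G)F_1=R(G)$, because $R(G)$ is transitive.

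The main obstacle is the bookkeeping of the conjugation direction. The convention $\rho_g(x)=xg$ together with the order of composition must give $\psi(1)=1$, and the propagation must then be by left multiplication, $g\mapsto yg$, rather than by right multiplication. If the directions mismatch, I would simply swap to $\rho_g\phi\rho_g^{-1}$ and propagate by $Y$ on the appropriate side; generation works either way.
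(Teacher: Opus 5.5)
The paper gives no proof of this lemma; it quotes it from Babai--Goodman--Lov\'asz. Your argument is correct and complete, so there is nothing to compare it against, only the following checks and remarks.

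The central computation is right. With $\psi=\rho_g^{-1}\phi\rho_g$ you get $\psi(x)=\phi(xg)g^{-1}$. So $g$ being fixed by all of $F_1$ gives $\psi\in F_1$. Applying the hypothesis to $\psi$ then gives $\phi(yg)=yg$. This step is legitimate precisely because the generating set $Y$ is the same for every element of $F_1$, which is how the paper uses the lemma, for example with $A$ and the $T_i$ in Proposition~\ref{prop:two-code}. Finiteness of $G$ (the paper's standing assumption) makes every element a positive word in $Y$, so the common fixed-point set is all of $G$. Transitivity of $R(G)$ then gives $F=R(G)F_1=R(G)$.

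In a written version, delete the abandoned first attempt with $C$ and $h$, the aborted conjugate $\rho_g\phi\rho_g^{-1}$, and the closing hedge about directions. The conjugation you settle on is already the right one.

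An equivalent and slightly slicker packaging of the same idea starts from $F_g=\rho_gF_1\rho_g^{-1}$. This has the same order as $F_1$, so $g$ is fixed by all of $F_1$ if and only if $F_g=F_1$, that is, if and only if $\rho_g\in N_F(F_1)$. That set of $g$ is a subgroup of $G$ containing $Y$, which removes the need for the positive-words remark.
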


The following result is an elementary observation and we omit its proof.
\begin{lemma}\label{lem:partition-join}
If a permutation of $G$ preserves the right-coset partitions for $H_1,\ldots,H_s$, it also preserves the right-coset partition for $H=\langle H_1,\ldots,H_s\rangle$. In particular, if it fixes $1$, it fixes $H$ setwise.
\end{lemma}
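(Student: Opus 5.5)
The statement to prove is Lemma~\ref{lem:partition-join}. The plan is to describe partition preservation through an equivalence relation, and then pass to the join of the relations.

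For a subgroup $K\le G$, let $\sim_K$ be the equivalence relation on $G$ given by $g\sim_K h$ if and only if $Hg$-type cosets agree, that is, $Kg=Kh$, or equivalently $hg^{-1}\in K$. A permutation $f$ preserves the right-coset partition of $K$ exactly when it is an automorphism of $\sim_K$: $g\sim_K h$ holds if and only if $f(g)\sim_K f(h)$. The forward implication follows from $f(Kg)=Kf(g)$. The converse follows by applying the same condition to $f^{-1}$, or from bijectivity together with the fact that $f$ maps each coset onto a full coset.

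The key step is to identify $\sim_H$, for $H=\langle H_1,\ldots,H_s\rangle$, as the transitive closure of the union of the relations $\sim_{H_j}$. First I show that each $\sim_{H_j}$ is contained in $\sim_H$, so the closure is contained in $\sim_H$. For the reverse inclusion, take $g\sim_H h$. Then $hg^{-1}=k_1k_2\cdots k_m$ with each $k_i$ in some $H_{j_i}$; inverses are harmless because each $H_j$ is a subgroup. Define the chain
\[
g_0=g,\qquad g_i=k_{m-i+1}g_{i-1}\quad(1\le i\le m).
\]
Then $g_m=h$, and each consecutive pair satisfies $g_{i-1}\sim_{H_{j}} g_i$ for the appropriate $j$, since $g_ig_{i-1}^{-1}=k_{m-i+1}$.

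Once this identification is in place, $f$ preserves each $\sim_{H_j}$ and therefore preserves their union. Because $f$ is a bijection, it then preserves the transitive closure of that union, namely $\sim_H$. This is exactly the statement that $f$ preserves the right cosets of $H$, so $f(Hg)=Hf(g)$.

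For the final assertion, put $g=1$. If $f(1)=1$, this gives $f(H)=H f(1)=H$, so $f$ fixes $H$ setwise.

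I do not expect any real obstacle. The only point that needs care is writing the chain in the correct order so that right cosets, rather than left cosets, are respected.
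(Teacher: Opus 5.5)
Your proof is correct. The paper omits this argument as an elementary observation, and your identification of the $H$-coset equivalence with the transitive closure of the union of the $H_j$-coset equivalences (via the chain $g_i=k_{m-i+1}g_{i-1}$, which correctly respects right cosets) is the natural way to supply it. Two harmless points are left implicit: the case $g=h$, where the product is empty, and the direction that an automorphism of $\sim_H$ satisfies $f(Hg)=Hf(g)$, which follows from $f(Hg)\subseteq Hf(g)$ together with $|f(Hg)|=|H|$.
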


\begin{definition}{\rm
A subset $S$ of a group $G$ is called \textit{\textbf{Sidon}} if the map
\[
\{(s,t)\in S\times S:s\neq t\}\longrightarrow G \quad\hbox{ defined by }
\quad (s,t)\longmapsto st^{-1}
\]
is injective.  Empty sets and singletons satisfy this condition.}
\end{definition}

\begin{lemma}\label{lem:sidon}
Let $P$ be a finite $2$-group. If the images of $S\subseteq P$ are independent in $P/\Phi(P)$ and $S$ is admissible, then $S$ is Sidon.
\end{lemma}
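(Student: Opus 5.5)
Suppose $st^{-1}=uv^{-1}$ with $(s,t)\neq(u,v)$, $s\neq t$, $u\neq v$, all in $S$. We aim for a contradiction, or to show $(s,t)=(u,v)$. The plan is to pass to $V=P/\Phi(P)$, where the equation becomes $\bar s+\bar t=\bar u+\bar v$. Because the images of $S$ are independent, this linear relation among distinct basis vectors is very restrictive. Since $\bar s\ne\bar t$ and $\bar u\neq\bar v$, the relation forces $\{s,t\}=\{u,v\}$ as sets. Indeed, the multiset $\{\bar s,\bar t,\bar u,\bar v\}$ must cancel in pairs, and independence means distinct elements of $S$ have distinct, independent images. (Distinctness of the elements of $S$ gives distinct images, because independent vectors are distinct.) Since $(s,t)\neq(u,v)$, the only remaining possibility is $u=t$ and $v=s$.

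In that case the equation reads $st^{-1}=ts^{-1}=(st^{-1})^{-1}$, so $(st^{-1})^2=1$ with $s\neq t$. This contradicts admissibility \eqref{eq:admissible}. Hence the map $(s,t)\mapsto st^{-1}$ is injective, and $S$ is Sidon.

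The only point needing care is the case analysis in $V$. We must check that $\bar s+\bar t+\bar u+\bar v=0$, with each pair consisting of distinct elements of an independent set, forces $\{s,t\}=\{u,v\}$. If some element among $u,v$ is not in $\{s,t\}$, its coefficient in the relation is $1$ (it appears once, since $u\ne v$), which contradicts independence. So $\{u,v\}\subseteq\{s,t\}$, and equality follows since both are $2$-element sets. No earlier lemma beyond the definition of $\Phi(P)$ being normal with elementary abelian quotient is needed; I expect no real obstacle.
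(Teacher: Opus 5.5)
Your proposal is correct and follows essentially the same route as the paper: reduce $st^{-1}=uv^{-1}$ to $\bar s+\bar t=\bar u+\bar v$ in $P/\Phi(P)$, use independence to get $\{s,t\}=\{u,v\}$, and rule out the reversed ordered pair via admissibility. Your explicit coefficient argument just spells out the step the paper states in one line.
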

\begin{proof}
Suppose $st^{-1}=uv^{-1}$, with $s\neq t$ and $u\neq v$. In $P/\Phi(P)$ this gives
$\bar s+\bar t=\bar u+\bar v$.
Independence says that the two unordered pairs are equal. If the ordered pairs are not equal, they are reversed, so
$st^{-1}=ts^{-1}=(st^{-1})^{-1}$.
This contradicts $(st^{-1})^2\neq1$. Hence the ordered pairs are equal.
\end{proof}

The proof of the following lemma can be deduced from proofs of the elementary abelian case in~\cite[Lemma~6.3(b) and Theorem~6.4]{BGL}, we include a proof for completeness.
\begin{lemma}\label{lem:abelian-code}
Let $V\neq0$ be an $\mathbb F_2$-vector space. Its translation group is encoded by four subspace partitions. The first two subspaces can be required to span $V$.
\end{lemma}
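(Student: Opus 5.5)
Since $F$ contains the translations and $V$ is generated by any basis, Lemma~\ref{lem:regular} reduces the statement to this: choose four subspaces so that every $f\in F$ with $f(0)=0$ is the identity. Two facts about such an $f$ drive the argument. First, if $f$ fixes a point $x$, then $f$ maps the coset $x+H_j$ onto itself for every $j$. Second, preserving the coset partition of $H_j$ means exactly that $f$ induces a permutation of $V/H_j$.

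\emph{Small case.} For $\dim V=1$ take $H_1=H_2=V$ and $H_3=H_4=0$. Then $F=\operatorname{Sym}(V)$, which has order $2$ and equals the translation group.

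\emph{Step 1: a product decomposition.} For $n=\dim V\ge2$, write $V=U\oplus W$ with $\dim U=\lceil n/2\rceil$ and $\dim W=\lfloor n/2\rfloor$. Take $H_1=W$ and $H_2=U$; these span $V$, as the statement requires. A permutation preserving both coset partitions permutes $V/W\cong U$ and $V/U\cong W$. Since each point is the intersection of its $W$-coset and its $U$-coset, $f$ has the product form $f(u+w)=\alpha(u)+\beta(w)$ with $\alpha\in\operatorname{Sym}(U)$ and $\beta\in\operatorname{Sym}(W)$. From $f(0)=0$ we get $\alpha(0)=0$ and $\beta(0)=0$.

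\emph{Step 2: force linearity.} Fix a surjective linear map $\varphi:U\to W$ and let $H_3=\{u+\varphi(u):u\in U\}$. Cosets of $H_3$ are the level sets of $u+w\mapsto w-\varphi(u)$. Preservation therefore says that $\beta(w)-\varphi(\alpha(u))$ depends only on $w-\varphi(u)$, say it equals $\gamma(w-\varphi(u))$.
\begin{itemize}
\item Setting $u=0$ gives $\gamma=\beta$.
\item Setting $w=0$ gives $\varphi(\alpha(u))=\beta(\varphi(u))$, using characteristic $2$ to drop signs.
\item Substituting back yields $\beta(w+\varphi(u))=\beta(w)+\beta(\varphi(u))$.
\end{itemize}
Because $\varphi$ is onto, this is Cauchy's equation on $W$, so $\beta$ is additive, hence $\mathbb F_2$-linear. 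Then $\alpha$ is determined modulo $\ker\varphi$ by $\varphi\alpha=\beta\varphi$.

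\emph{Step 3: kill the remaining freedom.} It remains to control $\alpha$ on $\ker\varphi$, which has dimension $\le1$, and to choose $H_4$ so that the only surviving $\beta$ is the identity. I would take $H_4$ to be the graph of a second map $\psi:U\to W$. Running the same computation for $H_4$ gives $\psi\alpha=\beta\psi$ together with additivity of $\alpha$ on the relevant piece.

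I would choose $\psi$ so that $\psi$ restricted to $\ker\varphi$ is injective, and so that $\beta$ must commute with $\psi\sigma$ for a fixed section $\sigma$ of $\varphi$. Concretely, I would identify $W$ with $\mathbb F_{2^k}$ and make $\psi\sigma$ multiplication by a primitive element. Then $\beta$ lies in the field $\mathbb F_{2^k}$ acting on itself. Next I would try to exclude nontrivial field elements by also requiring $\beta$ to fix a vector, for instance by arranging one coordinate equation in which $\alpha$ fixes a vector that $\varphi$ sends to a generator. This is where I might need to replace the primitive-element choice.

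\emph{Main obstacle.} The hardest step is this final one: forcing $\beta=\mathrm{id}$ from only two graph subspaces, with careful handling of the odd-dimensional case and of very small $n$ such as $n=2,3$, where I would check directly. If the primitive-element approach fails, I would fall back on reproducing the argument of \cite[Lemma~6.3(b) and Theorem~6.4]{BGL} in this coordinate framework.
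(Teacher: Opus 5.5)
\textbf{There is a genuine gap: Step 3 is never carried out, and in the form you set it up it cannot be.} Your Steps 1 and 2 are sound. The product form $f(u+w)=\alpha(u)+\beta(w)$ is correct, and so is the Cauchy-equation argument forcing $\beta$ to be additive.

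\emph{Why your framework fails in even dimension.} Take $n=2k$ even, so $\varphi$ is a bijection, and put $M=\psi\varphi^{-1}\in\operatorname{End}(W)$. Let $\beta\in\mathrm{GL}(W)$ satisfy $\beta M=M\beta$. Then the \emph{linear} map $f=\varphi^{-1}\beta\varphi\oplus\beta$ maps each of $U$, $W$, $H_3$, $H_4$ onto itself. Hence it preserves their coset partitions, so $f\in F_0$.

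For $k\ge 3$ such a $\beta\neq 1$ always exists. The units of $\mathbb F_2[M]\cong\mathbb F_2[x]/(\mu_M)$ are trivial only if $\mu_M\mid x(x+1)$, that is, $M^2=M$. In that case every element of $\mathrm{GL}(\ker M)\times\mathrm{GL}(\operatorname{im}M)$ commutes with $M$, and one of these factors has dimension at least $2$.

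Your primitive-element choice in particular leaves the whole group $\mathbb F_{2^k}^\times$, of order $2^k-1$, inside $F_0$. There is no fifth subspace available to force $\beta$ to fix a vector. So for every even $n\ge 6$, no choice of complementary $H_1,H_2$ plus two full graphs encodes the translation group. Falling back on Babai--Goodman--Lov\'asz is not an argument. A smaller issue: in the odd case $\beta$ need not commute with $\psi\sigma$, because $\alpha\sigma-\sigma\beta$ takes values in $\ker\varphi$.

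\emph{The missing idea.} The fourth subspace must be smaller than a full graph, and no linearity is needed. The paper fixes a basis $e_1,\dots,e_d$ and takes $J_1=\langle e_1,e_3,\dots\rangle$, $J_2=\langle e_2,e_4,\dots\rangle$, $J_3=\langle e_1+e_2,e_3+e_4,\dots\rangle$ and $J_4=\langle e_2+e_3,e_4+e_5,\dots\rangle$. For $d=2k$ the last of these has dimension only $k-1$.

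Because $J_2\cap J_3=J_1\cap J_4=0$, the following sets are singletons:
\begin{itemize}
\item for odd $i<d$: $J_3\cap(e_i+J_2)=\{e_i+e_{i+1}\}$ and $J_2\cap(e_i+e_{i+1}+J_1)=\{e_{i+1}\}$;
\item for even $i<d$: $J_4\cap(e_i+J_1)$ and $J_1\cap(e_i+e_{i+1}+J_2)$.
\end{itemize}
So if $f\in F_0$ fixes $e_i$, it fixes $e_{i+1}$. To start the chain, $e_1$ is fixed by descending to $\langle e_1,\dots,e_{d-1}\rangle$. This subspace equals $J_1+J_4$ or $J_2+J_3$, so it is fixed setwise, and on it the $J_i$ restrict to the same configuration.

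In your language, for odd $d$ the paper's $J_3,J_4$ are the graphs of the shift pair $e_{2i}\mapsto e_{2i-1}$, $e_{2i}\mapsto e_{2i+1}$ from $J_2$ to $J_1$. The analogous shift pair would complete your odd case: take bases $u_0,\dots,u_k$ of $U$ and $w_0,\dots,w_{k-1}$ of $W$, and set $\varphi(u_i)=w_i$, $\psi(u_i)=w_{i-1}$, with out-of-range terms zero. Then $\ker\varphi\cap\ker\psi=0$ makes $\alpha$ linear too. A direct Toeplitz computation shows the only invertible linear pair with $\beta\varphi=\varphi\alpha$ and $\beta\psi=\psi\alpha$ is $(1,1)$. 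The even case, however, genuinely needs a truncated subspace like $J_4$.
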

\begin{proof}
 Choose a basis $e_1,\ldots,e_d$ and put
\begin{align*}
J_1&=\langle e_1,e_3,e_5,\ldots\rangle,&
J_2&=\langle e_2,e_4,e_6,\ldots\rangle,\\
J_3&=\langle e_1+e_2,e_3+e_4,\ldots\rangle,&
J_4&=\langle e_2+e_3,e_4+e_5,\ldots\rangle.
\end{align*}
Only pairs with both indices at most $d$ are included. 

Clearly $J_1+J_2=V$ and $J_1\cap J_2=0$. If a vector of $J_3$ belongs to $J_2$, all its odd coordinates vanish, forcing the coefficients of every displayed generator of $J_3$ to vanish. Thus $J_2\cap J_3=0$. The identical argument with even coordinates gives $J_1\cap J_4=0$.

As in Definitions~\ref{def:data}, let $F$ be the set of all permutations of $V$ preserving the subgroup partitions on $J_1,J_2,J_3,J_4$. Let $f\in F$ fix $0$, so the four subspaces are setwise fixed by $f$. We first show that $e_1$ is fixed by $f$. If $d=1$, this is immediate, because
$J_1 = \langle e_1\rangle$. If $d\geq 2$,  put
\[
W=\langle e_1,\ldots,e_{d-1}\rangle
=
\begin{cases}
	J_1+ J_4, & \text{\(d\) even},\\
	J_2+ J_3, & \text{\(d\) odd}.
\end{cases}
\]

By Lemma~\ref{lem:partition-join}, $f$ preserves the coset partition for $W$ and fixes $W$ setwise. On $W$, it preserves
the coset partitions for $J_i \cap W$, since these cosets are intersections of the original cosets with $W$. The four intersections are exactly the same construction on the basis $e_1,\cdots,e_{d-1}$.
Repeating this reduction shows that $f$ fixes $\langle e_1\rangle=\{0,e_1\}$ setwise. Since $0$ is fixed by $f$, 
so is $e_1$.

Suppose $f$ fixes $e_i$, where $i<d$ is odd. The intersection
$J_3\cap(e_i+J_2)$
contains $e_i+e_{i+1}$ and is a singleton because $J_3\cap J_2=0$. Both intersected sets are invariant under $f$, so $e_i+e_{i+1}$ is fixed. Next
$
J_2\cap(e_i+e_{i+1}+J_1)=\{e_{i+1}\}$,
and the same reasoning fixes $e_{i+1}$. When $i$ is even, use $J_4\cap(e_i+J_1)$ first, and $J_1\cap(e_i+e_{i+1}+J_2)$ second. The intersections are again singletons by the equalities already proved.

By induction the entire basis is fixed. Applying Lemma~\ref{lem:regular} to  $F$, which contains all translations, gives $F=R(V)$.
\end{proof}

\begin{proposition}\label{prop:two-code}
For every nontrivial finite $2$-group $P$, the regular group $R(P)$ is encoded by at most two nonempty Sidon subsets avoiding $1$, and at most five nontrivial subgroup partitions. The subgroups defining the partitions can be chosen to generate $P$, and at least one subset relation is present.
\end{proposition}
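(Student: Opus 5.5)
The plan is to build the encoding data from the decomposition in Theorem~\ref{thm:five} and then check that the stabilizer of $1$ in $F$ is trivial, using Lemma~\ref{lem:regular}.

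\emph{Data.} Apply Theorem~\ref{thm:five} to $P$ to obtain $X$, $A=\langle X\rangle$, and admissible sets $T_1,T_2$. Their images form a basis of $P/\Phi(P)$, so the images of each $T_i$ are independent and nonzero. Hence no $T_i$ contains $1$, and by Lemma~\ref{lem:sidon} each $T_i$ is Sidon. The subset relations will be $S_i=T_i$ for those $i$ with $T_i\ne\emptyset$.

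\emph{Subgroup partitions.} If $A\neq1$, take the four subspace partitions of Lemma~\ref{lem:abelian-code} on $A$; inside $P$ these are partitions into right cosets of subgroups of $A$. As a fifth, generating subgroup I would first try $M=\langle A,T_1\rangle$ or $P$ itself. The role of $M$ is to separate $T_1$ from $T_2$ and to make the family of subgroups generate $P$.

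\emph{Degenerate cases.} If $A=P$, so that $P$ is elementary abelian, then Lemma~\ref{lem:abelian-code} alone would encode $R(P)$. This contradicts the required presence of a subset relation, so in that case I will re-split: move one basis vector of $X$ into a singleton $T_1$. A singleton is trivially Sidon, and the relation $f(tg)=tf(g)$ fixes $t$. If $A=1$, I drop the four partitions and rely on the $T_i$ together with the fifth subgroup.

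\emph{Verification.} Fix $f\in F$ with $f(1)=1$. By Lemma~\ref{lem:partition-join}, $f$ fixes $A$ setwise and preserves the induced coset partitions on $A$. The argument of Lemma~\ref{lem:abelian-code}, run inside $A$, then shows that $f$ fixes $X$ pointwise.

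Next, the relation at $g=1$ gives $f(T_i)=T_i$, and $f$ fixes each $T_i$ setwise. The key step is to upgrade this to pointwise fixing. Each right translate $T_it^{-1}$ with $t\in T_i$ contains $1$. Hence $f(T_it^{-1})=T_if(t^{-1})$ is a translate of $T_i$ through $1$, so $f(t^{-1})\in T_i^{-1}$.

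The Sidon property should make the incidence structure of translates rigid. Two distinct translates through $1$ meet only in $1$, since $st^{-1}=s't'^{-1}$ forces $(s,t)=(s',t')$. So $f$ induces a permutation $\sigma$ of $T_i$ with $f(st^{-1})=\sigma(s)\sigma(t)^{-1}$. The aim is to show $\sigma$ is trivial by chasing this identity along translates and combining it with the coset partitions of $M$ (and of $A$) to anchor one element of $T_i$. Once every element of $X\cup T_1\cup T_2$ is fixed, Lemma~\ref{lem:regular} gives $F=R(P)$.

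The step I expect to be the main obstacle is exactly this anchoring: proving $\sigma=1$ for $|T_i|\ge2$. The Sidon condition alone only yields an automorphism of a design-like structure, which may carry symmetry of its own. So the choice of the fifth subgroup, and possibly a tweak of one $T_i$ (for instance replacing an element $t$ by $tx$ with $x\in X$, as in the proof of Theorem~\ref{thm:five}, so that one element of $T_i$ is distinguished by its coset relative to $A$), will have to be tuned so that some element of each $T_i$ is forced to be fixed.
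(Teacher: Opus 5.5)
\paragraph{There is a genuine gap.} It is the step you flag yourself: nothing in your data forces the elements of $T_i$ to be fixed individually. In fact your data do not encode $R(P)$ in general. Take $P=Q_8$. Then $\Phi(P)=\{\pm1\}$, and the form $Q$ of Lemma~\ref{lem:quadratic-section} is nondegenerate and anisotropic, so $K=0$. Theorem~\ref{thm:five} returns $X=\emptyset$, $T_1=\{i,j\}$, $T_2=\emptyset$. Hence $A=1$, your four partitions disappear, and $M=\langle A,T_1\rangle=P$ gives only the trivial partition. The sole datum is the relation for $\{i,j\}$. The automorphism $\alpha$ of $Q_8$ with $i\mapsto j$, $j\mapsto i$, $k\mapsto -k$ satisfies $\alpha(T_1g)=\alpha(T_1)\alpha(g)=T_1\alpha(g)$, fixes $1$, and is nontrivial, so $F\neq R(P)$. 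Your proposed tweak $t\mapsto tx$ with $x\in X$ is unavailable because $X=\emptyset$. More generally, running Lemma~\ref{lem:abelian-code} inside $A$ only pins down $A$; it says nothing about the cosets in which the $T_i$ live. As you suspected, the Sidon structure alone is not rigid.

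\paragraph{The missing idea.} Apply Lemma~\ref{lem:abelian-code} to $V=P/\Phi(P)$ rather than to $A$. With $\pi\colon P\to V$ the projection, impose the partitions for the preimages $K_\ell=\pi^{-1}(J_\ell)$, $\ell=1,\dots,4$.
\begin{itemize}
\item Since $J_1\cap J_2=0$, we have $K_1g\cap K_2g=\Phi(P)g$. So every $f\in F_1$ permutes the $\Phi(P)$-cosets.
\item The induced permutation of $V$ fixes $0$ and preserves the four subspace partitions, so it is trivial. Thus $f$ fixes every $\Phi(P)$-coset setwise.
\item The relation at $g=1$ gives $f(T_i)=T_i$. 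Distinct elements of $T_i$ lie in distinct $\Phi(P)$-cosets, so $T_i$ is fixed pointwise, with no design-type argument needed.
\item The fifth partition is the one for $A$ itself. It gives $f(A)=A$, and since $A\cap\Phi(P)=1$, each $\Phi(P)$-coset meets $A$ in at most one point. So $A$ is fixed pointwise.
\end{itemize}
Generation of $P$ by the partition subgroups comes for free from $\langle K_1,K_2\rangle=P$. The case where both $T_i$ are empty is handled by simply adding a singleton relation for some nonidentity element; your re-splitting of $X$ also works but is unnecessary. In the $Q_8$ example the preimages are $K_1=\langle i\rangle$, $K_2=\langle j\rangle$, $K_3=\langle k\rangle$ and $K_4=\Phi(P)$. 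Since $\alpha(K_1)=K_2$, the automorphism $\alpha$ is excluded immediately.
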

\begin{proof}
Let $\pi:P\to V=P/\Phi(P)$ be the natural projection. By \eqref{eq:frattini} and the Burnside basis theorem, $V\neq0$. Apply Lemma~\ref{lem:abelian-code} to $V$, with subspaces $J_1,\ldots,J_4$, and put $K_i=\pi^{-1}(J_i)$. In $P$, impose the  four subgroup partitions for
$K_1,\cdots, K_4$.

Choose $A,X,T_1,T_2$ from Theorem~\ref{thm:five}. If $A\neq1$, also impose the subgroup partition for $A$.
Since $A$ is elementary abelian and the images of $X$ are independent in $V$, we have $A\cap \Phi(P) = 1$.

Also consider the subset relation for every nonempty $T_i$. Each $T_i$ is Sidon by Lemma~\ref{lem:sidon}, and none contains $1$, by its independent nonzero images.  If both sets are empty, add the singleton relation for any nonidentity element of $P$. Remove identity subgroups, empty sets, and repeated data, but retain each distinct nontrivial $K_i$, even if
$K_i=P$. The subgroups defining the partitions generate $P$, since $\langle K_1, K_2\rangle=P$.

As in Definitions~\ref{def:data}, let $F$ be the group preserving all of these subgroup partitions and all of these subset relations, and take $f\in F_1$. 
Since $J_1 \cap J_2 = 0$, for every
$g \in P$ we have
\[\Phi(P)g = K_1g\cap K_2g.\]
Thus the given partitions preserve the partition into $\Phi(P)$-cosets, and $f$ induces a permutation $\bar f$ on $V$. 
The preimage partitions induce the four subspace partitions.
As $f(1)=1$, the permutation $\bar f$ fixes $0$. Lemma~\ref{lem:abelian-code} implies $\bar f=1$; thus every $\Phi(P)$-coset is fixed setwise.

At $g=1$ the relation for $T_i$ says $f(T_i)=T_i$. Distinct members of $T_i$ have distinct images in $V$. A permutation preserving each coset and the set $T_i$ must therefore fix every member of $T_i$ individually.

The partition for $A$ gives $f(A) = A$. Since $A\cap \Phi(P) = 1$,
every $\Phi(P)$-coset contains at most one element of $A$. Therefore $f$ fixes $A$ pointwise.

Thus every element of $F_1$ fixes $A$ pointwise and fixes every $T_i$ pointwise. These sets generate $P$, so Lemma~\ref{lem:regular} gives $F=R(P)$. The count is at most $4+1=5$ subgroup partitions and two subset relations.
\end{proof}

\begin{lemma}\label{lem:once}
Let $h_1,\ldots,h_d$ be an irredundant generating set of a group: no $h_i$ belongs to the subgroup generated by the others. A word in these generators and their inverses, in which some $h_i$ occurs exactly once, cannot equal $1$.
\end{lemma}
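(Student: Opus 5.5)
Suppose, for a contradiction, that a word $w=u_1h_i^{\varepsilon}u_2$ equals $1$. Here $\varepsilon=\pm1$, and $u_1,u_2$ are words in the generators other than $h_i$ and their inverses. Such a factorization exists precisely because $h_i$ occurs exactly once in $w$; no letter of $u_1$ or $u_2$ is $h_i$ or $h_i^{-1}$. Write $H_i=\langle h_j:j\neq i\rangle$. Then the elements represented by $u_1$ and $u_2$ both lie in $H_i$.

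From $u_1h_i^{\varepsilon}u_2=1$ we solve for the isolated letter:
\[
h_i^{\varepsilon}=u_1^{-1}u_2^{-1}.
\]
The right-hand side lies in $H_i$, since $H_i$ is a subgroup. Hence $h_i^{\varepsilon}\in H_i$. If $\varepsilon=-1$, taking inverses gives $h_i\in H_i$ as well. In either case $h_i\in H_i$, which contradicts irredundancy.

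The only care needed is the bookkeeping that "occurs exactly once" means one occurrence counting both $h_i$ and $h_i^{-1}$. The word is treated as a formal product whose value in the group is computed, not as a reduced word. No cancellation issue arises, because we never reduce the word; we only evaluate it. There is no substantial obstacle; the argument is a one-line solve for the isolated letter. Finiteness of the group and the paper's earlier results are not needed.
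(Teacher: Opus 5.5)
Your proposal is correct and matches the paper's proof: isolate the unique occurrence as $u_1h_i^{\varepsilon}u_2$ and solve $h_i^{\varepsilon}=u_1^{-1}u_2^{-1}\in\langle h_j:j\neq i\rangle$, which contradicts irredundancy. Your extra remarks on counting both $h_i$ and $h_i^{-1}$ and on inverting when $\varepsilon=-1$ are consistent with the paper's one-line argument.
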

\begin{proof}
Write the word as $u h_i^{\varepsilon}v$, with $\varepsilon\in\{1,-1\}$, where $u,v$ use only the other generators. If it were $1$, then $h_i^{\varepsilon}=u^{-1}v^{-1}$, expressing $h_i$ in those other generators, a contradiction. 
\end{proof}

\begin{proposition}\label{prop:odd-code}
If $H\neq1$ has odd order, then $R(H)$ is encoded by at most two nonempty Sidon subsets avoiding $1$, with no subgroup partitions.
\end{proposition}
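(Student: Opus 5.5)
We plan to argue via an irredundant generating set and Lemma~\ref{lem:regular}. Choose a generating set $h_1,\ldots,h_d$ of $H$ of minimal size. It is then irredundant, no $h_i$ is $1$, and the $h_i$ are pairwise distinct.

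If $d=1$, so that $H=\langle h_1\rangle$ is cyclic, we take the single set $S_1=\{h_1\}$. It is Sidon trivially and avoids $1$. The relation $f(h_1g)=h_1f(g)$ together with $f(1)=1$ forces $f(h_1^k)=h_1^k$ for all $k$, so $F_1=1$ and $F=R(H)$.

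For $d\geq2$ we impose two subset relations:
\[
S_1=\{h_1,\ldots,h_d\},\qquad S_2=\{h_1h_2,\,h_2h_3,\,\ldots,\,h_{d-1}h_d\}.
\]
Both sets avoid $1$. For $S_1$ this holds because $h_i\neq1$. For $S_2$, the identity $h_ih_{i+1}=1$ would put $h_i$ in the subgroup generated by the other generators.

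\textbf{Sidon property.} We use Lemma~\ref{lem:once} together with the absence of involutions in a group of odd order. Suppose $st^{-1}=uv^{-1}$ with $s\neq t$ and $u\neq v$ in $S_1$. If $\{s,t\}\neq\{u,v\}$, then the word $st^{-1}vu^{-1}$ contains some generator exactly once, which is impossible by Lemma~\ref{lem:once}. If $(u,v)=(t,s)$, then $(st^{-1})^2=1$, so $st^{-1}=1$ because $|H|$ is odd, contradicting $s\neq t$.

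For $S_2$ we run the same argument on the word
\[
h_ih_{i+1}h_{j+1}^{-1}h_j^{-1}\,h_lh_{l+1}h_{k+1}^{-1}h_k^{-1}.
\]
We look for an index that occurs exactly once, choosing an extreme one (the smallest or largest index among $i,j,k,l$ and their successors). A short case analysis on the relative positions of $i,j,k,l$ is needed here, and the reversed-pair case is again excluded by odd order. This case analysis is the step I expect to be the main obstacle. Coincidences such as $i=l$ or $j=k$ make generators occur twice, so the extremal index has to be chosen carefully. If some configuration resists, I would replace $S_2$ by a sparser set, for instance products $h_ih_{i+1}$ only for odd $i$ combined with a second chain, and redo the check.

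\textbf{Rigidity.} Let $f\in F_1$. At $g=1$ the relations give $f(S_1)=S_1$ and $f(S_2)=S_2$. We encode adjacency as follows: $x$ is an $S_\epsilon$-successor of $y$ when $x\in S_\epsilon y$. Then $h_ih_{i+1}\in S_2$ is an $S_1$-successor of $h_{i+1}$, and $f$ preserves this relation on $S_1\cup S_2$. We next show that these are the only such adjacencies. That is, if $h_ih_{i+1}=h_mh_j$, then $(m,j)=(i,i+1)$; otherwise the word $h_ih_{i+1}h_j^{-1}h_m^{-1}$ contains a generator exactly once, contradicting Lemma~\ref{lem:once}. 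We must also exclude $h_ih_{i+1}=h_j$ outright, which follows from irredundancy.

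Once this is established, the bipartite digraph between $S_1$ and $S_2$ is a path $h_1-h_1h_2-h_2-h_2h_3-\cdots-h_d$, with edges oriented consistently. Its only automorphism preserving the orientation is the identity, since the ends $h_1$ and $h_d$ are distinguished by their degrees and orientations. Hence $f$ fixes every $h_i$, and Lemma~\ref{lem:regular} gives $F=R(H)$.
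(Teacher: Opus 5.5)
There is a genuine gap in the rigidity step. In fact the choice $S_2=\{h_ih_{i+1}\}$ cannot be rescued.

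First, your adjacency claim is incomplete. Lemma~\ref{lem:once} applied to $h_ih_{i+1}h_j^{-1}h_m^{-1}=1$ only gives $\{m,j\}=\{i,i+1\}$. The second solution $(m,j)=(i+1,i)$ is the commutator relation $h_ih_{i+1}=h_{i+1}h_i$, in which every letter occurs twice, so the lemma does not exclude it.

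More seriously, the data never link $h_i$ to $h_ih_{i+1}$ unless these two generators commute. Membership $h_ih_{i+1}\in h_iS_1$ is a left translate, whereas the relations only see right translates $S_\epsilon g$; you would need $h_ih_{i+1}h_i^{-1}\in S_1\cup S_2$. So when consecutive generators do not commute, the structure induced on $S_1\cup S_2$ is the matching $h_{i+1}\to h_ih_{i+1}$ with $h_1$ isolated, and for $d\geq3$ its edges can be permuted. When they all commute you do get the undirected path through $h_1,h_1h_2,h_2,h_2h_3,\ldots,h_d$. However, every arc then points from $S_1$ into $S_2$, so the reversal $h_i\leftrightarrow h_{d+1-i}$ preserves it and the two ends are not distinguished.

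This is not merely a weakness of the argument. Take $H=C_3\times C_3$; any minimal generating set $h_1,h_2$ is a basis. The group automorphism $\alpha$ swapping $h_1$ and $h_2$ satisfies $\alpha(S_1)=S_1$ and $\alpha(S_2)=S_2$, hence $\alpha(S_\epsilon g)=S_\epsilon\alpha(g)$ for all $g$. Thus $1\neq\alpha\in F_1$ and $F\neq R(H)$. Reversing the basis does the same for every elementary abelian group of odd order and rank at least $2$.

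What you need is an orientation-sensitive set, and the paper uses quotients $\delta_i=h_{i+1}h_i^{-1}$ instead of products. Then $h_{i+1}=\delta_ih_i\in Dh_i$ is a right-translate relation directly between consecutive members of $S$. One checks that $S\cap Dh_i=\{h_{i+1}\}$ for $i<d$ and $S\cap Dh_d=\emptyset$. The only competing solution, $h_j=h_{j+1}h_j^{-1}h_{j+1}$, is excluded because it makes $h_{j+1}h_j^{-1}$ an involution; this is where odd order really matters. The relation $z\in Dy$ on $S$ is then the directed path $h_1\to h_2\to\cdots\to h_d$, which is rigid.

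By contrast, the Sidon step you flagged as the main obstacle is routine. Take the least and the greatest index among $i,j,k,l$; by Lemma~\ref{lem:once} each must occur at least twice, which forces $\{i,j\}=\{k,l\}$. This works verbatim for your $S_2$ and for the paper's $D$.
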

\begin{proof}
Choose an ordered irredundant generating set $h_1,\ldots,h_d$. Put
\[
S=\{h_1,\ldots,h_d\},\qquad
\delta_i=h_{i+1}h_i^{-1}\quad(1\leq i<d),\qquad
D=\{\delta_i:1\leq i<d\}.
\]
We use the relations for $S,D$, omitting the empty set $D$ if $d=1$ and omitting repetitions. Every $\delta_i$ is nonidentity, since adjacent generators are distinct.

We first verify the Sidon assertions. If
$h_i h_j^{-1}=h_k h_l^{-1}$, for $i\neq j$ and $k\neq l$,
then, after moving the right side to the left, Lemma~\ref{lem:once} rules out any generator index occurring just once among the four positions. There must therefore be exactly two indices, each occurring twice, and $\{i,j\}=\{k,l\}$. If the ordered pairs are not equal, they are reversed. Then $h_i h_j^{-1}$ is equal to its inverse and is nonidentity, giving an involution in $H$. Odd order excludes this. Thus $S$ is Sidon.

The elements $\delta_i$ are distinct. Indeed, if $\delta_i=\delta_j$ with $i<j$, the relation
\[
h_{i+1}h_i^{-1}h_jh_{j+1}^{-1}=1
\]
contains $h_i$ exactly once, contrary to Lemma~\ref{lem:once}.

Now consider
\begin{equation}\label{eq:D-collision}
\delta_i\delta_j^{-1}=\delta_k\delta_l^{-1},
\qquad i\neq j,\quad k\neq l.
\end{equation}
Regard each occurrence of an index $a\in\{i,j,k,l\}$ as the edge $\{a,a+1\}$ of the path on vertices $1,\ldots,d$. There are four edge occurrences, counted with multiplicity. Let $a$ be the least distinct edge index and $b$ the greatest. They are different, because $i\neq j$. In the expanded relation obtained from \eqref{eq:D-collision}, the generator $h_a$ occurs as many times as edge $a$ occurs: no edge with smaller index is present, and no larger edge contains that vertex. Similarly, $h_{b+1}$ occurs as many times as edge $b$ occurs. Lemma~\ref{lem:once} forces each of these multiplicities to be at least two. With only four occurrences available, precisely two distinct edges occur, each twice. It follows that $\{i,j\}=\{k,l\}$.

The only remaining unequal ordered-pair possibility reverses the pair. It makes $\delta_i\delta_j^{-1}$ a nonidentity element equal to its inverse, again impossible in odd order. Hence $D$ is Sidon.

Let $F$ be the group preserving these relations, and take $f\in F_1$. Then $S$ is setwise fixed by $f$. Let us prove that $f$ actually fixes every element of $S$. This is obvious if $d=1$, so assume $d>1$. For $1\leq i<d$, we claim
\begin{equation}\label{eq:successor}
S\cap Dh_i=\{h_{i+1}\}.
\end{equation}
The displayed element belongs to the intersection. Conversely, an element in it gives
$h_k=h_{j+1}h_j^{-1}h_i$,
for some $j$. By Lemma~\ref{lem:once}, there can be no index occurring once among $k,j+1,j,i$. Since $j\neq j+1$, the only possibilities are
$k=j+1$ and $i=j$, or $k=j$ and $i=j+1$.
The first gives the desired successor. The second says $h_j=h_{j+1}h_j^{-1}h_{j+1}$,
so $(h_{j+1}h_j^{-1})^2=1$, an impossibility. This proves \eqref{eq:successor}.

The same argument shows that $S\cap Dh_d =\emptyset$. Hence the directed relation on $S$ given
by $z \in Dy$ is exactly the path

\[h_1\longrightarrow h_2\longrightarrow \cdots \longrightarrow h_d.\]
Since $f$ preserves $S$ and the $D$ relation, it induces an automorphism of
this directed path. Its initial vertex is unique, and every successive vertex is then fixed. Thus
$f$ fixes all generators. Lemma~\ref{lem:regular} therefore gives $F=R(H)$.
\end{proof}

\section{Reduction to arbitrary finite groups}\label{sec:general}

 The reduction to a $2$-subgroup and an odd-order subgroup uses the following theorem.

\begin{theorem}[Breuer–Guralnick~{\cite[Corollary 1.2]{AG}}]\label{thm:AG}
Every finite group $G$ has a Sylow $2$-subgroup $P$ and a subgroup $H$ of odd order such that
\[G=\langle P,H\rangle.\]
\end{theorem}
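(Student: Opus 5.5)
This statement is quoted from Breuer and Guralnick, and the paper uses it as a black box. If I had to prove it myself, I would run a minimal-counterexample induction on $|G|$. The only input beyond elementary group theory would be a generation statement for nonabelian finite simple groups, and that statement depends on the classification of finite simple groups.

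Let $G$ be a counterexample of least order, and let $N$ be a minimal normal subgroup. By minimality, $G/N=\langle \bar P,\bar H\rangle$, where $\bar P=PN/N$ is a Sylow $2$-subgroup of $G/N$ for a Sylow $2$-subgroup $P$ of $G$, and $\bar H$ has odd order. Let $M$ be the full preimage of $\bar H$. I would split into three cases according to the type of $N$.

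\emph{$N$ is an elementary abelian $2$-group.} Then $N$ is the normal Sylow $2$-subgroup of $M$. By Schur--Zassenhaus, $M=N\rtimes H$ with $H$ of odd order. Since $N\leq P$, we get $\langle P,H\rangle\geq\langle P,M\rangle=G$.

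\emph{$N$ has odd order.} Then $M$ itself has odd order, because $|M|=|N|\,|\bar H|$. So $H=M$ works, as $\langle P,M\rangle$ maps onto $G/N$ and contains $N$.

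\emph{$N=T_1\times\cdots\times T_k$ with the $T_i$ nonabelian simple and permuted transitively by $G$.} This is the main obstacle. I would first replace $\bar H$ by a suitable odd-order subgroup $H_0\leq M$ mapping onto $\bar H$; such a subgroup exists since $M/N$ has odd order, so a Hall-type or Frattini-argument choice is available, for instance via a Sylow-by-Sylow construction. This reduces the task to enlarging $H_0$, keeping odd order, so that $\langle P,H\rangle$ contains $N$.

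Here I would invoke the simple-group input: each nonabelian simple $T$ is generated by a Sylow $2$-subgroup $S$ of $T$ together with a single element $x$ of odd order, and $x$ can be chosen with a lot of freedom, say from a prescribed conjugacy class. I would try to take $H=\langle H_0,y\rangle$ with $y\in N_M(\cdots)\cap N$ of odd order, built from such elements $x$ in the factors. The point is that $P\cap N$ is a Sylow $2$-subgroup of $N$, and hence a product of Sylow $2$-subgroups of the $T_i$.

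Two things remain to be controlled. First, the projections of $\langle P\cap N,y\rangle$ must be all of each $T_i$ while avoiding diagonal subgroups; choosing non-conjugate elements $x_i$ in distinct factors handles this. Second, $\langle H_0,y\rangle$ must keep odd order. That second requirement is the delicate point.

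I expect to be forced to strengthen the induction hypothesis so that it also handles $y$. Concretely, I would prove the statement for all pairs $(G,\text{odd-order subgroup normalized appropriately})$, or else verify the simple case through a case analysis over the classification (alternating groups, groups of Lie type via regular semisimple elements, sporadic groups via character tables).
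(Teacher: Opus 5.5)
Since the paper only cites this theorem from Breuer--Guralnick, I'm comparing your proposal against what a complete proof would need, and there is a genuine gap.

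Your two abelian cases are correct ($N\leq P$ together with Schur--Zassenhaus, respectively $|M|$ odd). An odd-order $H_0\leq M$ with $H_0N=M$ does exist, by induction on $|M/N|$ using Feit--Thompson and the Frattini argument, although you only gesture at this. The case of a nonabelian minimal normal subgroup, however, carries the whole content of the theorem, and you leave it as a plan. The step $H=\langle H_0,y\rangle$ fails as stated: nothing forces this subgroup to have odd order. Typically it does not, since two odd-order subgroups routinely generate a group of even order (already $\langle(1\,2\,3),(3\,4\,5)\rangle=A_5$). Saying you would strengthen the induction hypothesis, or do a case analysis through the classification, does not supply the missing argument. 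On the other hand, the difficulty you list first is not real. If $D\leq N$ contains $P\cap N$ and projects onto every $T_i$, then $D\cap T_i$ is normal in $T_i$ and contains $P\cap T_i\neq1$, so $D=N$. No non-conjugacy condition on the $x_i$ is needed.

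The missing idea is to adjoin not a single element but an odd-order subgroup $Y\leq N$ whose $M$-conjugates are all $N$-conjugate. Then the Frattini argument produces an odd-order group normalizing $Y$. A Sylow $r$-subgroup of $N$, for an odd prime $r$, has this property. Suppose $T\cong T_i$ is generated by a Sylow $2$-subgroup and a Sylow $r$-subgroup.
\begin{enumerate}
\item Choose Sylow $r$-subgroups $R_i$ of $T_i$ with $\langle P\cap T_i,R_i\rangle=T_i$, and put $Y=R_1\cdots R_k$, a Sylow $r$-subgroup of $N$.
\item The Frattini argument gives $M=N\,N_M(Y)$.
\item The supplement statement above, applied to $N_M(Y)$ and its normal subgroup $N_M(Y)\cap N$, gives an odd-order $H_1\leq N_M(Y)$ with $H_1N=M$.
\item Then $H=YH_1$ has odd order. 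Moreover $\langle P,H\rangle$ contains every $T_i$ and $H_1$, hence contains $NH_1=M$, and therefore contains $\langle P,M\rangle=G$.
\end{enumerate}
So the simple-group input you actually need is this: every nonabelian finite simple group is generated by a Sylow $2$-subgroup and a Sylow $r$-subgroup for some odd prime $r$. This follows, for example, from the classification-based theorem that every nonabelian finite simple group is generated by an involution and an element of odd prime order. The input you propose instead, a Sylow $2$-subgroup plus one odd-order element from a prescribed class, is stated without support. By itself it also gives no control over how $H_0$ acts on $N$.
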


\begin{lemma}\label{lem:combine}
Suppose $G=\langle H_1,\ldots,H_k,g_1,\ldots,g_t\rangle$, and each $R(H_i)$ has an encoding as in Definition~\ref{def:data}. Regard those subsets and subgroups as subsets and subgroups of $G$. Add the partition for each nontrivial $H_i$ and the singleton $\{g_j\}$ for every $g_j\neq1$. The resulting data encode $R(G)$. An added partition for $H_i$ may be omitted if its preservation already follows
from the component data.
\end{lemma}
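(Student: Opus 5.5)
We have to show that the combined data encode $R(G)$. Let $F$ be the group of permutations of $G$ preserving all the listed subset relations and subgroup partitions. As observed after Definitions~\ref{def:data}, $F$ is a group containing $R(G)$. By Lemma~\ref{lem:regular} it is enough to prove that every $f\in F_1$ fixes each member of a generating set of $G$. The natural generating set is $H_1\cup\cdots\cup H_k\cup\{g_1,\ldots,g_t\}$.

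We first treat the singletons. For $g_j\ne1$, the relation for $\{g_j\}$ evaluated at $g=1$ gives $f(g_j)=g_jf(1)=g_j$. Trivial $g_j$ are fixed because $f(1)=1$.

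Next we treat the $H_i$. Since $f$ preserves the right-coset partition of $H_i$ and fixes $1$, it maps $H_i$ onto the coset $H_if(1)=H_i$. Hence $f|_{H_i}$ is a permutation of $H_i$. We check that this restriction preserves the encoding data of $R(H_i)$. Take a subset $S\subseteq H_i$ from that encoding and an element $h\in H_i$. Then $f(Sh)=Sf(h)$ holds as an identity in $G$, and both $Sh$ and $Sf(h)$ lie in $H_i$. Likewise, for a subgroup $K\le H_i$ from the encoding, $f(Kh)=Kf(h)$ with both sides inside $H_i$. So $f|_{H_i}$ lies in the group $F^{(i)}$ attached to the encoding of $R(H_i)$, and $F^{(i)}=R(H_i)$ by hypothesis. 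Thus $f|_{H_i}=\rho_x$ for some $x\in H_i$. Because $f(1)=1$, we get $x=1$, so $f$ fixes $H_i$ pointwise.

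The two previous steps show that $F_1$ fixes the generating set pointwise, and Lemma~\ref{lem:regular} gives $F=R(G)$. For the last sentence of the statement, the partition for $H_i$ was used only to conclude that $f(H_i)=H_i$ whenever $f(1)=1$. If the component data alone force this, for instance through Lemma~\ref{lem:partition-join} when the subgroups in the encoding of $R(H_i)$ generate $H_i$, then the added partition is redundant and the argument goes through without it. The only point that needs care is exactly this one: the restriction $f|_{H_i}$ must be a permutation of $H_i$ before the encoding hypothesis for $R(H_i)$ can be applied. Everything else is immediate.
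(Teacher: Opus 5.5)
Your proof is correct and follows essentially the same route as the paper. You take $f\in F_1$, use the added coset partition (or the component data) to get $f(H_i)=H_i$, note that $f|_{H_i}$ lies in the encoded group $R(H_i)$ and fixes $1$, hence is the identity, fix each $g_j$ via its singleton relation at $g=1$, and conclude with Lemma~\ref{lem:regular}.
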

\begin{proof}
Let $F\le\mathrm{Sym}(G)$ preserve all the subsets and all the subgroups, and take $f\in F_1$. The added partitions, or their preservation by the component data, give $f(H_i)=H_i$. Its restriction to $H_i$ preserves that subgroup's encoding for every subset and for every subgroup lying in $H_i$, and fixes $1$. Since the full preserving group there is $R(H_i)$, $f_{|H_i}$ is the identity. Each added singleton similarly fixes the corresponding $g_j$. Lemma~\ref{lem:regular} gives $F=R(G)$.
\end{proof}

\begin{proposition}\label{prop:general-code}
For every nontrivial finite group $G$, the group $R(G)$ is encoded by at most four nonempty Sidon subsets avoiding $1$, and at most five nontrivial subgroup partitions. 
\end{proposition}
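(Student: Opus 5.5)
Apply Theorem~\ref{thm:AG} to obtain a Sylow $2$-subgroup $P$ and an odd-order subgroup $H$ with $G=\langle P,H\rangle$. We then combine the encodings of Propositions~\ref{prop:two-code} and~\ref{prop:odd-code} through Lemma~\ref{lem:combine}, with $k\le 2$ and $t=0$. The count must come out to at most two Sidon subsets from $P$, two from $H$, and five subgroup partitions in total. The Sidon and avoiding-$1$ properties are intrinsic to the subsets: a subset of $P$ or $H$ that is Sidon in the subgroup remains Sidon in $G$. So the real content is to verify that the partitions added by Lemma~\ref{lem:combine} for $P$ and $H$ can be omitted.

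We split into cases. If $P=1$, then $G=H$ has odd order, and Proposition~\ref{prop:odd-code} alone gives at most two Sidon subsets and no partitions. If $H=1$, then $G=P$, and Proposition~\ref{prop:two-code} gives the bound directly. In the general case we take the data of Proposition~\ref{prop:two-code} for $P$: at most five subgroup partitions, whose subgroups generate $P$, together with at most two Sidon subsets. We add the at most two Sidon subsets $S,D$ of Proposition~\ref{prop:odd-code} for $H$. This gives at most four Sidon subsets and five partitions, provided no extra partitions for $P$ or $H$ are needed.

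For $P$, the omission clause of Lemma~\ref{lem:combine} applies. The subgroups defining the five partitions generate $P$, so Lemma~\ref{lem:partition-join} shows that any $f$ fixing $1$ and preserving these partitions also preserves the $P$-coset partition and fixes $P$ setwise. Hence the partition for $P$ is redundant.

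The main obstacle is $H$, whose encoding contains no subgroup partitions. Here I would argue directly rather than through the literal hypothesis of Lemma~\ref{lem:combine}. Take $f\in F_1$. Once $f$ is known to restrict to the identity on $P$, apply the subset relation for $S$ at $g=1$ to get $f(S)=S$. The $D$-relation gives $f(Dy)=Df(y)$ for all $y$, so $f$ preserves the directed relation ``$z\in Dy$'' restricted to $S$. As in the proof of Proposition~\ref{prop:odd-code}, this relation is exactly the path $h_1\to\cdots\to h_d$: equation~\eqref{eq:successor} and $S\cap Dh_d=\emptyset$ are statements inside $H$ and do not depend on $G$. Therefore $f$ fixes each $h_i$. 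Thus $f$ fixes $P$ pointwise and fixes a generating set of $H$, hence fixes a generating set of $G$, and Lemma~\ref{lem:regular} gives $F=R(G)$.

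The remaining worry is the restriction to $P$. Preserving the $P$-coset partitions makes $f|_P$ preserve the encoding of $P$, because each $K_i$-coset partition and $A$-partition, intersected with $P$, is the corresponding partition of $P$, and the $T_i$-relations restrict as well. So $f|_P$ is the identity, exactly as in the proof of Lemma~\ref{lem:combine}. If, in the case $H=1$ or $P=1$, the count exceeds the bound, we may drop the singleton padding. In all cases the totals are at most four nonempty Sidon subsets avoiding $1$ and at most five nontrivial partitions.
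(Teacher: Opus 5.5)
Your proof is correct and follows the paper's overall plan: Theorem~\ref{thm:AG}, the two component encodings, and Lemma~\ref{lem:partition-join} to make the partition for $P$ redundant. The only real divergence is how you avoid adding a partition for $H$.

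The paper keeps Lemma~\ref{lem:combine} as a black box. It observes that the undirected graph with edges $\{x,sx\}$ ($s\in S$) has the right cosets of $\langle S\rangle=H$ as its connected components. So the $S$-relation alone forces $f$ to preserve the $H$-coset partition, and the omission clause applies.

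You instead reopen the proof of Proposition~\ref{prop:odd-code}. The path argument works verbatim for any $f\in F_1\leq\operatorname{Sym}(G)$: $f(S)=S$ from the relation at $g=1$, and \eqref{eq:successor} together with $S\cap Dh_d=\emptyset$ are computed inside $H$. So $f$ fixes each $h_i$ without ever needing $f(H)=H$. This is valid, and your preliminary clause ``once $f$ is known to restrict to the identity on $P$'' is never actually used. The paper's route is more modular; yours is slightly more self-contained.

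Two small points. First, your closing remark about dropping singleton padding is vacuous, since the count is always at most $2+2$ subsets and five partitions. Second, unlike the paper, you do not add the trivial partition for $G$ when $|G|$ is odd. The statement as written does not require this, but the paper needs $s\geq1$ (and $r\geq1$) to feed the data into Theorem~\ref{thm:encoding}. With your version, an odd-order $G$ would have $s=0$.
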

\begin{proof}
Choose $P,H$ from Theorem~\ref{thm:AG}, so that $G=\langle P,H\rangle$.
Use Proposition~\ref{prop:two-code} for $P$ when $P\neq1$, and Proposition~\ref{prop:odd-code} for $H$ when $H\neq1$.

The subgroups in the encoding of $P$ generate $P$, so Lemma~\ref{lem:partition-join} implies preservation of the
partition into right cosets of $P$. 
For $H$, the connected components of the relation $x\longrightarrow sx$
$(s\in S)$, regarded as an undirected graph on $G$, are exactly the right cosets of $\left\langle S\right\rangle=H $. Thus
its subset relation already implies preservation of the $H$-coset partition.

 Combine the encodings using Lemma~\ref{lem:combine}. The number of subset relations is at most
$2+2=4$, and the number of subgroup partitions is at most five. At
 least one subset relation is present because $G\neq 1$. If no subgroup partition is present, add
 the partition for $G$; it imposes no restriction and still leaves at most five partitions.
\end{proof}

\section{From relational data to a finite lattice}\label{sec:lattice}
In a partially ordered set, we say that $y$ \textit{\textbf{covers}} $x$, and write $x\lessdot y$, if $x<y$ and no element lies strictly between them. In a partially ordered set with a least and a greatest element, an \textit{\textbf{atom}} is an element covering the least element, and a \textit{\textbf{coatom}} is an element covered by the greatest element.

\begin{lemma}\label{lem:incidence}
Let $\mathcal A$ and $\mathcal C$ be disjoint finite sets, and specify incidences between them. Form a partially ordered set with elements
$\{\widehat0,\widehat1\}\mathbin{\dot\cup}\mathcal A\mathbin{\dot\cup}\mathcal C$,
where $\widehat0$ is below everything, $\widehat1$ is above everything, and $a<c$ for $a\in\mathcal A$, $c\in\mathcal C$ precisely at the specified incidences. If two distinct members of $\mathcal C$ have at most one common incident member of $\mathcal A$, this poset is a lattice.
\end{lemma}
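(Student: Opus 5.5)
Since the poset is finite and bounded (it has $\widehat0$ and $\widehat1$), it suffices to show that every pair of elements has a join. A finite poset with a greatest element in which all binary joins exist is a lattice: the meet of $x,y$ is the join of the set of common lower bounds. That set is nonempty because it contains $\widehat0$, and its join is computed by iterating binary joins. Dually, I could instead show that all meets exist. I will check whichever is more convenient; I expect meets to be the natural choice, since the hypothesis is about common lower bounds of coatoms.

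I will check meets pairwise, splitting into cases by where $x$ and $y$ lie. If $x$ and $y$ are comparable, or if one of them is $\widehat0$ or $\widehat1$, the meet is the smaller element. If $x,y\in\mathcal A$ are distinct, the only common lower bound is $\widehat0$, because the elements of $\mathcal A$ are pairwise incomparable. Hence $x\wedge y=\widehat0$. If $x\in\mathcal A$ and $y\in\mathcal C$ are incomparable, then any common lower bound lies below $x$ and is therefore $x$ or $\widehat0$. Since $x\not\le y$, the bound must be $\widehat0$, so again $x\wedge y=\widehat0$.

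The only substantive case is $x,y\in\mathcal C$ distinct. Their common lower bounds are $\widehat0$ together with the common incident members of $\mathcal A$. No element of $\mathcal C$ is a common lower bound: elements of $\mathcal C$ are pairwise incomparable, and $\widehat1$ is excluded. By hypothesis there is at most one common incident atom $a$. If such an $a$ exists, the set of common lower bounds is $\{\widehat0,a\}$, which has greatest element $a$. Otherwise the set is $\{\widehat0\}$. In both cases a greatest lower bound exists. This is exactly where the hypothesis is used: two common atoms would be incomparable maximal lower bounds, and then the meet would fail to exist.

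Before doing this, I will confirm that the relation is a partial order. Reflexivity and antisymmetry are immediate. For transitivity, I need that every chain $u<v<w$ passes through at most one incidence step, which holds because the only strict relations are $\widehat0<\cdot$, $\cdot<\widehat1$, and $a<c$. With all meets existing and $\widehat1$ present, joins exist as well, so the poset is a lattice. No step appears to be a real obstacle. The only care needed is the transitivity check and correct handling of the degenerate cases where $\mathcal A$ or $\mathcal C$ is empty; in those cases the argument above still applies verbatim.
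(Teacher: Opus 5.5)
Your proof is correct, and its core is the same case analysis as the paper's. In particular, the hypothesis is used in the same way, to show that two distinct coatoms have at most one common lower bound besides $\widehat0$.

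The difference is in how you handle joins. The paper computes meets and joins explicitly. For the join of two distinct atoms, it first notes that the hypothesis is self-dual: two distinct members of $\mathcal A$ cannot lie below two common members of $\mathcal C$, i.e.\ the incidence graph has no $4$-cycle. So that join is the unique common coatom, or $\widehat1$. You compute only meets and get joins from the standard criterion that a finite poset with a greatest element and all binary meets is a lattice. This halves the case check and makes the dual observation unnecessary. The cost is that joins are not exhibited explicitly, but the paper never uses them later.

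One small slip: your first paragraph states the join version of this criterion for posets ``with a greatest element''. The construction you describe there (join of the common lower bounds, nonempty because of $\widehat0$) actually needs the least element. The version you invoke at the end, all meets plus $\widehat1$, is stated correctly, and since your poset is bounded nothing is affected.
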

\begin{proof}
The hypothesis also implies the dual statement: two distinct members of $\mathcal A$ cannot have two common members of $\mathcal C$, since those two would have the original pair as common lower neighbours. Equivalently, the bipartite incidence graph has no cycle of length four.

For distinct $a,a'\in\mathcal A$, their meet is $\widehat0$. Their join is their unique common member of $\mathcal C$, if one exists, and is $\widehat1$ otherwise. Dually, distinct members of $\mathcal C$ have join $\widehat1$ and meet their unique common lower neighbour, or $\widehat0$. For an incident pair $a<c$, the meet is $a$ and the join is $c$. For a nonincident pair from opposite parts, the only common lower bound is $\widehat0$ and the only common upper bound is $\widehat1$. Identical elements and pairs involving an endpoint are immediate. 
\end{proof}

\begin{lemma}\label{lem:chains}
Let $L_0$ be a finite lattice. Replace every element $x$ by a nonempty finite chain $C_x$, with all the chains disjoint, and order the resulting set by the orders within the chains and by $C_x<C_y$ whenever $x<y$ in $L_0$.
This is a lattice. In particular, the assertion holds when the least and greatest elements are left as singletons.
\end{lemma}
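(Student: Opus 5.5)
We verify directly that the ordered set $L$ described in Lemma~\ref{lem:chains} is a partial order and that every pair of elements has a meet; since $L$ is finite and has a greatest element, namely the top of the chain $C_{\widehat1}$ where $\widehat1$ is the top of $L_0$, the existence of all meets then gives all joins. We argue through the projection $\pi:L\to L_0$ sending each element of $C_x$ to $x$. By construction, for $p\in C_x$ and $q\in C_y$ we have $p\le q$ if and only if either $x=y$ and $p\le q$ in the chain $C_x$, or $x<y$ in $L_0$. This order is reflexive and antisymmetric, since $p\le q\le p$ forces $x\le y\le x$, hence $x=y$, and then the chain order decides. Transitivity is a routine case check on whether the projected inequalities are strict. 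In particular $\pi$ is order-preserving.

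Now take $p\in C_x$ and $q\in C_y$. If $x=y$, the elements are comparable in the chain $C_x$, so the smaller one is the meet. If $x<y$, then $p<q$ and $p$ is the meet; the case $y<x$ is symmetric. In the remaining case $x$ and $y$ are incomparable, and we put $m=x\wedge y$ in $L_0$. Then $m<x$ and $m<y$ strictly. Let $t$ be the top element of $C_m$, which exists because $C_m$ is a nonempty finite chain. Every element of $C_m$ lies below both $p$ and $q$, so $t$ is a lower bound.

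It remains to show that $t$ is the greatest lower bound. Let $r\in C_z$ be any lower bound of $p$ and $q$. Since $\pi$ is order-preserving, $z\le x$ and $z\le y$, hence $z\le m$. If $z<m$, then $r<t$ by definition of the order. If $z=m$, then $r\in C_m$ and $r\le t$ because $t$ is the top of that chain. Thus $t=p\wedge q$. The same argument applied to the dual order (replace $\wedge$ by $\vee$ and tops of chains by bottoms) produces joins directly, so finiteness is not even needed for the join step. This shows that $L$ is a lattice.

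The ``in particular'' clause is the special case in which $C_{\widehat0}$ and $C_{\widehat1}$ are singletons, since nothing above requires the chains to have any particular length. The only point that needs care is the incomparable case: we must pick the extreme element of $C_{x\wedge y}$ (its top for meets, and dually the bottom of $C_{x\vee y}$ for joins), and then check that comparison within a single fibre does not create a larger lower bound. That check is exactly the case $z=m$ above.
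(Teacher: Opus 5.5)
Your proof is correct and follows essentially the same route as the paper. Both split on whether the two projections are equal, comparable, or incomparable, and in the incomparable case take the top of $C_{x\wedge y}$ as the meet. The paper writes out the join as the bottom of $C_{x\vee y}$ and dualizes, which mirrors your argument, while your explicit treatment of the case $z=m$ and of the partial-order axioms fills in details the paper leaves implicit.
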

\begin{proof}
Let $u\in C_x$ and $v\in C_y$. If $x=y$, then $u$ and $v$ are comparable within the chain $C_x=C_y$, so their join and meet are simply the greater and the lesser of the two.
If $x<y$, then $u<v$, so their meet and join are $u,v$; the reversed case is similar. Suppose $x,y$ are incomparable. Their join $z=x\vee y$ is different from both $x,y$. The least element of $C_z$ is an upper bound for $u,v$. Any other common upper bound lies in a chain $C_w$ with $w\geq x,y$, hence $w\geq z$. It is therefore above the least element of $C_z$. This proves that the latter is the join of $u,v$. Dually, their meet is the greatest element of $C_{x\wedge y}$. 
\end{proof}

\begin{theorem}\label{thm:encoding}
Let $G\neq1$ be finite. Suppose that $S_1,\ldots,S_r$ are nonempty Sidon subsets of $G$ avoiding $1$, with $r\geq1$, and $H_1,\ldots,H_s$ are nontrivial subgroups, with $s\geq 1$. Let $F$ be their full preserving permutation group as in Definition~\ref{def:data}. There is a finite lattice $L$ with
$\operatorname{Aut}(L)\cong F$.
If $F=R(G)$, then $L$ has a regular orbit and
\begin{equation}\label{eq:orbit-bound}
|L/\operatorname{Aut}(L)|
 = 3+3r+2s+\binom{r+1}{2}+\binom{s+1}{2}.
\end{equation}
\end{theorem}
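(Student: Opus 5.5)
The plan is to build $L$ in two stages. First we build a bounded-height incidence poset to which Lemma~\ref{lem:incidence} applies. Then we use Lemma~\ref{lem:chains} to replace elements by chains of carefully chosen, pairwise distinct lengths, so that every automorphism is forced to respect the "type" of each element. The points of $G$ form one class of atoms, and this class will be the regular orbit. For every $i\le r$ and $g\in G$ there is a block element $B_{i,g}$ standing for the translate $S_ig$. For every $j\le s$ and every right coset $H_jg$ there is a coset element $C_{j,H_jg}$.

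\textbf{Step 1: making the incidence $C_4$-free.} Two translates $S_ig,S_ig'$ with $g\ne g'$ meet in at most one point, because $S_i$ is Sidon. Two distinct cosets of the same $H_j$ are disjoint. The 4-cycles come only from pairs of different relations: $S_ig\cap S_{i'}g'$, $S_ig\cap H_jg'$, and $H_jg\cap H_kg'$. In the last case the intersection is a coset of $H_j\cap H_k$, which can be large. For each unordered pair of relation indices, with repetition allowed, I would therefore insert an intermediate layer of "intersection" elements. These lie above the relevant points and below both relations. This is where the terms $\binom{r+1}{2}$ and $\binom{s+1}{2}$ should come from: one new orbit for each pair $i\le i'$ and for each pair $j\le k$. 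The constant terms $3r$ and $2s$ count the block or coset orbit itself, together with the chain elements used as labels and a flag-type orbit recording the incidence between a point and its block. The remaining $3$ counts $\widehat0$, $\widehat1$ and the orbit $G$. I would arrange the layers so that the Hasse diagram between consecutive ranks is a $C_4$-free incidence. Lemma~\ref{lem:incidence}, applied layer by layer (or to the graph-type poset directly), then shows that the poset is a lattice. Lemma~\ref{lem:chains} shows that lengthening chains preserves this.

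\textbf{Step 2: identifying $\operatorname{Aut}(L)$ with $F$.} Every $f\in F$ permutes points, maps $S_ig$ to $S_if(g)$ and cosets to cosets, and hence intersections to intersections. So $f$ extends to an automorphism of $L$, and this gives a homomorphism $F\to\operatorname{Aut}(L)$. It is injective because the extension restricts to $f$ on the points. Conversely, every automorphism preserves rank and covering numbers, so by the distinct chain lengths it preserves each type and each label $i$ or $j$. It therefore permutes the points of $G$, and the permutation $\varphi$ it induces carries the point-set of $B_{i,g}$, namely $S_ig$, to the point-set of some $B_{i,g'}$. The identification $g'=\varphi(g)$ is where the flag and label data are needed. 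Fix $s_0\in S_i$; this is possible because $S_i$ is nonempty, and $1\notin S_i$. Then $g$ is recovered from $S_ig$ by the Sidon property: $g$ is the unique element whose translate is this set, and the flag structure pins down the marked element. A symmetric argument handles cosets. Hence $\varphi\in F$, and an automorphism inducing the identity on $G$ is trivial, since every other element is determined by its set of points below it. This gives $\operatorname{Aut}(L)\cong F$.

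\textbf{Step 3: counting orbits.} When $F=R(G)$, the group acts regularly on the points, so that orbit is regular. Each remaining element type forms a single orbit, because right translation acts transitively on translates, on cosets, and on nonempty intersections of a fixed pair type. We need the hypotheses $r,s\ge1$ and the nontriviality of the data so that no type is empty and the count is exactly \eqref{eq:orbit-bound}. I expect the main obstacle to be Step 1 for pairs of distinct subgroups. The intersection layer has to kill all 4-cycles, and it has to do so while still allowing each coset element to be reconstructed from the elements below it. I would first try taking, for each $j\le k$, the nonempty sets $H_jg\cap H_kg'$ (for $j=k$ these are simply the cosets) as an intermediate rank. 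I would then check that, after this insertion, any two coatoms share at most one cover.
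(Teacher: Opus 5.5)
\textbf{There is a genuine gap.} Step~1 never produces a concrete poset, and the mechanism you sketch cannot work as stated.

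\emph{The lattice property.} All your blocks and cosets sit over one common class of point-atoms, so you are forced to add intersection layers. The poset then has height greater than three, and Lemma~\ref{lem:incidence} no longer applies. Absence of $4$-cycles between consecutive ranks does not give a lattice. Take atoms $p,q$, an element $c$ covering both, elements $a>p$ and $b>q$, and $d$ covering $a,b$. Every pair of adjacent layers is free of $4$-cycles, yet $c$ and $d$ are incomparable minimal upper bounds of $p,q$.

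\emph{The overlaps.} The layers also fail to remove them. The element for $(H_j\cap H_k)g$ and your $(j,j)$-element $H_jg$ share all $|H_j\cap H_k|$ points. The mixed intersections $S_ig\cap H_jg'$, which you list yourself, receive no layer at all.

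\emph{The orbit count.} Step~3 is wrong as well. Write $c=g'g^{-1}$; the intersections $S_ig\cap S_{i'}g'=(S_i\cap S_{i'}c)g$ generally form several $R(G)$-orbits, not one per pair. Moreover, chains of pairwise distinct lengths on the $r$ block types already cost at least $\binom{r+1}{2}$ orbits. So your bookkeeping cannot reproduce the exact value \eqref{eq:orbit-bound}.

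\emph{The link between $g$ and its block.} In Step~2, blocks that only remember the set $S_ig$ encode merely that $f$ permutes the right translates of $S_i$. This is weaker than $f(S_ig)=S_if(g)$. For example, in a cyclic group $\langle c\rangle$ of order at least $5$, inversion permutes the translates of $S=\{c,c^3\}$, since $S^{-1}=Sc^{-4}$. Yet $f(S)=S^{-1}\neq S=Sf(1)$. Since $g\notin S_ig$, the lattice needs an element tying $g$ to its block. You never construct this element or check it against the lattice property. Recording the point $s_0g$ inside the block would not help: it encodes the relation for $S_is_0^{-1}$ instead of $S_i$.

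\textbf{The missing idea} is to stay at height three by never letting different relations share more than a single atom. The paper uses:
\begin{itemize}
\item one common atom class $B=\{B(g)\}$;
\item a private copy $A_i=\{A_i(g)\}$ of $G$ for each $S_i$;
\item private coset atoms $X_j(H_jg)$ for each $H_j$.
\end{itemize}
The coatoms are:
\begin{itemize}
\item $C_i(g)>B(g),A_i(g)$, which identifies the copy $A_i$ with $B$;
\item $U_i(g)>B(g),A_i(tg)$ for $t\in S_i$, which is the translate together with its flag $B(g)$;
\item $E_j(g)>B(g),X_j(H_jg)$.
\end{itemize}
Each coatom has exactly one lower cover in $B$, so coatoms attached to different indices share at most their $B$-neighbour. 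Within a single index, $4$-cycles are excluded by the Sidon property, by $1\notin S_i$, and by the fact that each $E_j(g)$ lies above a single coset atom. Lemma~\ref{lem:incidence} then applies directly.

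Types are made intrinsic by replacing each $C_i(g)$ by a chain of $i+1$ elements and each $X_j$-atom by a chain of $j+1$ elements. These chains, not intersection layers, produce the terms $\binom{r+1}{2}$ and $\binom{s+1}{2}$ in \eqref{eq:orbit-bound}.
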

\begin{proof}
We first construct a lattice in which specified types must be preserved. These type requirements will subsequently be removed by an intrinsic order construction. We start by defining elements and types. The atom types are
\begin{align*}
B&=\{B(g):g\in G\},\\
A_i&=\{A_i(g):g\in G\}\quad(1\leq i\leq r),\\
X_j&=\{X_j(H_jg):H_jg\text{ a right coset of }H_j\}\quad(1\leq j\leq s).
\end{align*}
The coatom types are
\begin{align*}
C_i&=\{C_i(g):g\in G\}\quad(1\leq i\leq r),\\
 U_i&=\{U_i(g):g\in G\}\quad(1\leq i\le r),\\
E_j&=\{E_j(g):g\in G\}\quad (1\leq j\leq s).
\end{align*} All these sets are disjoint copies, which we call \textit{\textbf{types}}. An equality of labels in different types does not identify elements. The incidences are exactly
\begin{align}\label{eq:incidences}
C_i(g)&>B(g),\ A_i(g),\notag\\
U_i(g)&>B(g),\ A_i(tg)\quad(t\in S_i),\\
E_j(g)&>B(g),\ X_j(H_jg).\notag
\end{align}
There are no other atom--coatom comparisons. Add endpoints $\widehat0,\widehat1$ as in Lemma~\ref{lem:incidence}.

Next, we show that what we have defined above is indeed a lattice.
We check every possible source of two common lower neighbours for distinct coatoms. Two of type $U_i$, say $U_i(g),U_i(h)$ with $g\neq h$, have different $B$-neighbours. If they had two common $A_i$-neighbours, there would be $a,b,c,d\in S_i$ with $a\neq b$, $c\neq d$, and
$ag=ch$ and $bg=dh$.
These identities imply $ba^{-1}=dc^{-1}$. The Sidon property gives $b=d$ and $a=c$, and hence $g=h$, a contradiction.

The coatoms $C_i(h)$ and $U_i(g)$ can share both their $B$- and $A_i$-neighbours only if $h=g$ and $h=tg$ for some $t\in S_i$. This would put $1$ in $S_i$, which is excluded. Distinct coatoms of type $C_i$ share neither of their two lower neighbours. Coatoms associated with different indices $i$ have different private $A_i$-types, and can share at most their $B$-neighbour.

A coatom of type $E_j$ and one of type $C_i$ or $U_i$ can share only a $B$-neighbour. For $j\neq k$, coatoms of types $E_j,E_k$ likewise can share only a $B$-neighbour. Finally, $E_j(g),E_j(h)$ with $g\neq h$ have different $B$-neighbours and can share only the single coset element $X_j(H_jg)$, if their cosets agree. This completes the check. Lemma~\ref{lem:incidence} shows that the poset is a lattice $L_0$.

We now study the type-preserving automorphisms.
Suppose an automorphism of $L_0$ preserves each displayed type. Its action on $B$ is a permutation $f$ of $G$. Every coatom has a unique lower neighbour of type $B$. Thus it must send
\[
C_i(g)\mapsto C_i(f(g)),\quad
U_i(g)\mapsto U_i(f(g)),\quad
E_j(g)\mapsto E_j(f(g)).
\]
The $C_i$ incidences then force $A_i(g)\mapsto A_i(f(g))$. The $U_i$ incidences say exactly
\[
f(S_i g)=S_i f(g).
\]
To see the subgroup condition in both directions, the coset vertex $X_j(H_jg)$ is below exactly the coatoms $E_j(h)$ with $h\in H_jg$. Its image is a coset vertex below $E_j(f(g))$, necessarily $X_j(H_jf(g))$. Equality of its complete upper-neighbour set gives
\[
f(H_jg)=H_jf(g).
\]
It also determines the action on all coset vertices. Hence every type-preserving automorphism determines one and only one $f\in F$.

Conversely, for $f\in F$, use these formulas on all types and fix the endpoints. The formula $X_j(H_jg)\mapsto X_j(H_jf(g))$ is well-defined: representatives in the same coset have images in the same coset by \eqref{eq:data}. It is bijective, and the formulas preserve all incidences in both directions, using the corresponding conditions for $f^{-1}$. They therefore define an automorphism of $L_0$. This proves that its type-preserving automorphism group is isomorphic to $F$, with faithful action on $B$.

Finally we remove all type labels. Replace each coatom of $C_i$ $(1 \le i \le r)$ by a chain of $i+1$ elements. Replace each atom of $X_j$ $(1 \le j \le s)$ by a chain of $j+1$ elements. Leave all other elements unchanged and
inherit the order from $L_0$. By Lemma~\ref{lem:chains}, the result is a lattice $L$.

We must recover all types from the order of $L$ alone. First we show that every type that has just been replaced by a chain can be recovered. Every original atom in $X_j$ has $|H_j|\geq2$ upper covers. Every original coatom has at least two lower covers, as is explicit in \eqref{eq:incidences} and uses $S_i\neq\emptyset$.

Since the new chains all contain at least two elements, it follows that the upper endpoints of the new atom-chains are exactly the elements that are neither atoms nor $\widehat0$ and have at least two upper covers. Dually, the lower endpoints of the new coatom-chains are exactly the elements that are neither coatoms nor $\widehat1$ and have at least two lower covers.

Write $X_j^+$ for the upper endpoints
of the chains replacing $X_j$, and $C_i^-$
for the lower endpoints of the chains replacing $C_i$. If $x\in X_j^+$, then  $[\widehat0,x]$ is a chain of $j+2$ elements; its cardinality uniquely determines $j$ and recovers the whole fibre. Similarly, for $y\in C_i^-$ the interval $[y,\widehat1]$ has $i+2$ elements and recovers $i$ and its fibre. Thus every $X_j^+$ and $C_i^-$ is preserved.

Elements of $E_j$ are uniquely determined as the upper covers of elements of $X_j^+$. Elements of $B$ are uniquely determined as the common lower covers of elements of both $C_1^-$ and $E_1$ (this is why we require $s \ge 1$): the lower covers of these two sets are $B\dot\cup A_1$
and $B\dot\cup X_1^+$, respectively. 
Now elements of $A_i$ are uniquely determined as the lower covers of elements of $C_i^-$ that are not in $B$. Finally, elements of $U_i$ are the upper covers of elements of $A_i$ that are not in $C_i^-$. This shows that every automorphism of the unlabelled lattice $L$ induces a type-preserving automorphism of $L_0$.
Between two finite chains of the same length there is a unique 
order isomorphism, so the induced automorphism
determines the original one uniquely.
Conversely, every type-preserving automorphism of $L_0$ extends by these unique chain isomorphisms. Therefore $\operatorname{Aut}(L)\cong F$.

Suppose now that $F=R(G)$. Every original type is transitive under $R(G)$: the types indexed by $g$ are regular, and a right-coset type is transitive by right translation. Type labels cannot be exchanged, and each chain level contributes exactly one orbit. The endpoints contribute two more. Therefore
\[
|L/\operatorname{Aut}(L)|
= 2+\sum_{i=1}^{r}i+2r+s+\sum_{j=1}^{s}j+r+s+1=3+3r+2s+\binom{r+1}{2}+\binom{s+1}{2}
\]
which is \eqref{eq:orbit-bound}. The type $B$ is a regular orbit, because an element fixing such a point fixes its label $g$ in the regular action of $R(G)$.
\end{proof}

\section{Proof of the main theorem}\label{sec:conclusion}

\begin{proof}[Proof of Theorem~$\ref{thm:main}$]
For the trivial group, a one-element chain has trivial automorphism group, one orbit, and a regular orbit.
For a general nontrivial finite group $G$,  Proposition~\ref{prop:general-code} supplies data satisfying Theorem~\ref{thm:encoding} with $r\leq4$, $s\leq5$. Thus Theorem~\ref{thm:encoding} yields
$\operatorname{Aut}(L)\cong R(G)\cong G$,
and

\[
|L/\operatorname{Aut}(L)|
 \leq3+12+10+\binom{5}{2}+\binom{6}{2}
 =25+10+15=50.
\]
Moreover, a regular orbit is part of the construction.
\end{proof}

\section*{Declaration on the use of generative AI}

Generative artificial intelligence tools were used to assist in developing the proof of Theorem~4.1 and in preparing the first draft of this manuscript. All AI-generated material was subsequently reviewed, verified, and revised by the authors, who take full responsibility for the correctness and content of the paper.

\end{document}